\newtheorem{theorem}{Theorem}[section]
\newtheorem{lemma}[theorem]{Lemma}
\newtheorem{conjecture}[theorem]{Conjecture}
\theoremstyle{definition}
\theoremstyle{remark}
\begin{document}
\baselineskip 16pt

\title[Applications of degree estimate for subalgebras]
{Applications of degree estimate for subalgebras}

\author[Yun-Chang Li and Jie-Tai Yu]
{Yun-Chang Li and Jie-Tai Yu}
\address{Department of Mathematics, The University of Hong
Kong, Hong Kong SAR, China} \email{liyunch@hku.hk,\ liyunch@163.com}
\address{Department of Mathematics, The University of Hong
Kong, Hong Kong SAR, China} \email{yujt@hkusua.hku.hk,\
yujietai@yahoo.com}


\thanks{The research of Jie-Tai Yu was partially
supported by an RGC-CERG Grant.}

\subjclass[2000] {Primary 13S10, 16S10. Secondary 13F20, 13W20,
14R10, 16W20, 16Z05.}

\keywords{Degree estimate, free associative algebras, commutators, Jacobians, test elements, retracts, automorphic orbits, coordinate, Mal'tsev-Neumann algebras}


\begin{abstract}
Let $K$ be a field of positive characteristic and $K\langle x, y\rangle$ be the free algebra of rank two over $K$. Based on the degree estimate done by Y.-C. Li and J.-T. Yu, we extend the results of S.J. Gong and J.T. Yu's results: (1) An element $p(x,y)\in K\langle x,y\rangle$ is a test element if and only if $p(x,y)$ does not belong to any proper retract of $K\langle x,y\rangle$; (2) Every endomorphism preserving the automorphic orbit of a nonconstant element of $K\langle x,y\rangle$ is an automorphism; (3) If there exists some injective endomorphism  $\phi$ of $K\langle x,y\rangle$ such that $\phi(p(x,y))=x$ where $p(x,y)\in K\langle x,y\rangle$, then $p(x,y)$ is a coordinate. And we reprove that all the automorphisms of $K\langle x,y\rangle$ are tame. Moreover, we also give counterexamples for two conjectures established by Leonid Makar-Limanov, V. Drensky and J.-T. Yu in the positive characteristic case.
\end{abstract}

\maketitle

\noindent Let $K$ be a field   and $A_n=K[x_1,\cdots,x_n]$ or $A_n=K\langle x_1,\cdots,x_n\rangle$. A polynomial
$p\in A_n$ is called a test element of $A_n$, provided for all $K$-endomorphisms $\phi$ of $A_n$ fixing $p$ are  automorphisms of $A_n$.  A subalgebra $R$ is a retract of $A_n$ if there exists a $K$-endomorphism $\pi$ of $A_n$ such that $R=\pi(A_n)$ and $\pi$ fixes every element of $R$. Test elements and retracts of groups and other
algebraic structures are defined in a similar way. Test elements and retracts of algebras and groups have recently been studied in \cite{C,DY2,DY3,ES,J2,MSY,MUY2,MY1,MY2,MY3,MZ,S1,S2,SY1,SY2}.

\

\noindent It is easy to know by the definition that a test element does not belong to any proper retract of $A_n$.  The converse is proved by Turner \cite{T} for free groups, by Mikhalev and Zolotykh \cite{MZ} and by Mikhalev and J.-T. Yu \cite{MY2,MY3} for free Lie algebras and free Lie superalgebras respectively, by Mikhalev, Umirbaev and J.-T. Yu \cite{MUY1} for free nonassociative algebras, and by S.-J. Gong and J.-T. Yu \cite{GY} for $A_2$ in characteristic zero case. 

\

\noindent Now consider another related topic.  The set $S(p)=\{\phi(p)|\phi\in Aut(A_n)$, $p\in A_n\}$ is called the automorphic orbit of $p$. In a free group, the automorphic orbit of some element is defined similarly. Certainly an automorphism preserves the automorphic orbit of an element. The converse is proved by Shpilrain \cite{S3} and Ivanov \cite{I}
for free groups of rank two, by D. Lee \cite{L} for free groups of any rank, by Mikhalev and J.-T. Yu \cite{MY3} for free Lie algebras, by Mikhalev, Umirbaev and J.-T. Yu \cite{MUY1} for free nonassociative algebras, by van den Essen and Shpilrain \cite{ES} for $A_2$ when $p$ is a coordinate, by Jelonek \cite{J1} for polynomial algebras over $C$ when $p$ is a coordinate and by S.-J. Gong and J.-T. Yu \cite{GY} for $A_2$ in characteristic zero case. 
 
 \
 
 \noindent In section 1 we extend S.-J. Gong and J.-T. Yu's above results
 regarding  test elements, retracts,
 and automorphic orbits to the free associative algebra $A_2=K\langle x,y\rangle$ over a field $K$ of positive characteristic. Most of the steps are the same, and the only difference is that we give a new method instead of using the AMS theorem since it is not true for positive characteristic case.
 
\
 
\noindent  The following degree estimate \cite{LY} in arbitrary characteristic
plays a crucial role in this paper.

\

\begin{theorem}[Degree estimate]\label{th1}
Let $A_n=K\langle x_1,\cdots,x_k\rangle$ be a free associative algebra over a field $K$ of arbitrary characteristic and let $f,g\in A$ be algebraically independent. Suppose that $v(f)$ and $v(g)$ are algebraically dependent and neither $deg(f)$ divides $deg(g)$ nor $deg(g)$ divides $deg(f)$. Then for any $P\in F\langle x,y\rangle$ we have
$$deg(P(f,g))\geq D(f,g)w_{deg(f),deg(g)}(P)\,\,\,\,where\,\,\,\,D(f,g)=\frac{deg([f,g])}{deg(fg)}$$
\end{theorem}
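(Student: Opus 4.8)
\noindent\emph{Proof proposal.} The plan is to adapt the Shestakov--Umirbaev method of degree estimates to the noncommutative setting, using the structure theory of free associative algebras in place of unique factorisation. Write $d_1=\deg f$, $d_2=\deg g$, $d=\gcd(d_1,d_2)$, $m=d_1/d$, $n=d_2/d$; the hypothesis that neither degree divides the other forces $m\ge 2$, $n\ge 2$ and $\gcd(m,n)=1$. First I would normalise the leading forms: since $v(f)$ and $v(g)$ are homogeneous and algebraically dependent, they lie in a common polynomial subalgebra $K[r]$ with $r$ homogeneous (a standard consequence of the structure theory of subalgebras of free associative algebras, going back to Bergman and Cohn), and being homogeneous they are scalar multiples of powers of $r$: $v(f)=\al r^{m}$, $v(g)=\beta r^{n}$, where we may take $\deg r=d$. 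Since the statement for $(f,g)$ and all $P$ is equivalent to the statement for $(\al^{-1}f,\beta^{-1}g)$ and all $P$ — replace $P(x,y)$ by $P(\al x,\beta y)$, which has the same weighted support, and note $D(f,g)$ is scaling-invariant — we may assume $v(f)=r^{m}$, $v(g)=r^{n}$. In particular $v(f)$ and $v(g)$ commute, so $fg$ and $gf$ have the same leading form, $\deg[f,g]<\deg(fg)$ and $D(f,g)<1$; this strict inequality is what makes the estimate nonvacuous.

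\noindent Two computations drive the argument. \emph{(a)} For any word $W$ in two letters, $r^{\,m|W|_{x}+n|W|_{y}}$ is the leading form of $W(f,g)$, so $\deg W(f,g)=w_{d_1,d_2}(W)$; hence the homomorphism $\sigma\colon K\langle x,y\rangle\to K\langle x_1,\dots,x_k\rangle$, $x\mapsto v(f)$, $y\mapsto v(g)$, has image in $K[r]$ and kernel exactly the two-sided ideal $I=([x,y],\ x^{n}-y^{m})$ (modulo $[x,y]$ one is reduced to the classical fact that the defining ideal of the monomial curve $(t^{m},t^{n})$ with $\gcd(m,n)=1$ is the principal ideal generated by $x^{n}-y^{m}$). \emph{(b)} Each generator of $I$, evaluated at $(f,g)$, drops by the \emph{same} amount relative to its weighted degree: $[f,g]$ has weighted degree $d_1+d_2$ and actual degree $e:=\deg[f,g]$, a drop of $d_1+d_2-e$; while $h:=f^{n}-g^{m}$, whose leading forms cancel, has weighted degree $mnd$ and — by expanding $f=r^{m}+f_1$, $g=r^{n}+g_1$ — actual degree at most $(mn-m-n)d+e$, a drop of $(m+n)d-e=d_1+d_2-e$ as well. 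Since every use of a relation costs at least $d_1+d_2$ in weighted degree (note $mnd\ge(m+n)d$, as $m,n\ge 2$) but saves at most $d_1+d_2-e$ in actual degree, a polynomial assembled from $k$ relation-uses has weighted degree at least $k(d_1+d_2)$ and loses at most $k(d_1+d_2-e)$, hence at most $(1-D(f,g))\,w_{d_1,d_2}(P)$; this is the inequality to be proved, and the iterated commutator $[f,g]^{k}$ — with $\deg[f,g]^{k}=ke$ and weighted degree $k(d_1+d_2)$ — shows it is sharp.

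\noindent To make this rigorous I would induct on $w_{d_1,d_2}(P)$, with a suitably strengthened inductive hypothesis, in the style of Shestakov--Umirbaev. Let $P_t$ be the top weighted-homogeneous component of $P$. If $\sigma(P_t)\ne 0$ there is no top-degree cancellation and $\deg P(f,g)=w_{d_1,d_2}(P)\ge D(f,g)\,w_{d_1,d_2}(P)$. If $\sigma(P_t)=0$ then $P_t\in I$, so $P_t=\sum_i A_i[x,y]B_i+\sum_j C_j(x^{n}-y^{m})D_j$ with all summands weighted-homogeneous of the same weighted degree; substituting $(f,g)$, using \emph{(b)} for the relation factors and the inductive hypothesis for the cofactors, and reorganising the resulting expression so that no cancellation is left unaccounted for, one derives the bound. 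The main obstacle is precisely that last reorganisation. In the commutative polynomial setting of Shestakov--Umirbaev the kernel ideal is generated by the single relation $x^{n}-y^{m}$; here $I$ carries the \emph{two} generators $x^{n}-y^{m}$ and $[x,y]$, so ``reduced'' elements of $K\langle f,g\rangle$ form a two-parameter family, and the delicate point is to exclude \emph{super-additive} cancellation — that some clever combination of relation-uses might save strictly more than $d_1+d_2-e$ per use. This requires the careful structural case analysis pioneered by Shestakov and Umirbaev, together in places with the sharp \emph{equality} in \emph{(b)}, obtained by pinning down which commutator terms in the expansion of $f^{n}-g^{m}$ can vanish. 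One convenient feature of the noncommutative setting is that the relevant expansions involve no binomial coefficients, so once the combinatorics is in place the argument runs uniformly over fields of arbitrary characteristic.
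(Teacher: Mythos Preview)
The paper does not prove Theorem~\ref{th1} at all: it is quoted from the companion paper \cite{LY} and used as a black box throughout Section~1. So there is no ``paper's own proof'' to compare against here; what can be said is how your sketch relates to the known approaches.

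Your outline follows the Shestakov--Umirbaev philosophy, and the normalisation in the first paragraph (reducing to $v(f)=r^{m}$, $v(g)=r^{n}$ via Bergman's centralizer theorem) is fine. But step \emph{(b)} hides the real difficulty rather than dispatching it. The assertion that $\deg(f^{n}-g^{m})\le (mn-m-n)d+e$, i.e.\ that the drop for the relation $x^{n}-y^{m}$ equals the drop $d_{1}+d_{2}-e$ for $[x,y]$, does \emph{not} follow from ``expanding $f=r^{m}+f_{1}$, $g=r^{n}+g_{1}$'': that expansion only gives $\deg(f^{n}-g^{m})\le mnd-\min(d_{1}-\deg f_{1},\,d_{2}-\deg g_{1})$, which has nothing to do with $e=\deg[f,g]$. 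In the commutative Shestakov--Umirbaev argument the link between the two drops is supplied by the Jacobian (via the identity relating $f^{n}-g^{m}$ to an integral of the Jacobian), and there is no direct noncommutative substitute for that step. Without this, the whole ``each relation-use costs $d_{1}+d_{2}$ and saves at most $d_{1}+d_{2}-e$'' bookkeeping collapses. You yourself flag the second gap --- controlling super-additive cancellation in the inductive reorganisation --- but the first gap is already fatal to the scheme as written.

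For orientation: the proof in \cite{LY}, like the characteristic-zero predecessor \cite{MLY2}, does not go through a Shestakov--Umirbaev style syzygy analysis. It embeds $K\langle X\rangle$ into a Mal'tsev--Neumann power-series algebra (note the keyword list of the present paper), takes roots there using Bergman's radical lemma (cf.\ Lemma~\ref{lem9}), and reads off the degree estimate from properties of those roots. That machinery is what makes the argument uniform in the characteristic; your closing remark that ``no binomial coefficients appear'' is not by itself enough to push a Shestakov--Umirbaev argument through in characteristic $p$.
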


\noindent In section 2, we consider another topic. J.-T. Yu and Makar-Limanov raised the following two conjectures in \cite{Y2} and in \cite{DY1} respectively.

\begin{conjecture}[J.-T. Yu]\label{con1} Let $K$ be a field of characteristic zero and $f$ and $g$ be algebraically independent polynomials in $K\langle x,y\rangle$ such that the homogeneous components of maximal degree of $f$ and $g$ are algebraically dependent. Let $f$ and $g$ generate their own centralizers in $K\langle X\rangle$ respectively. Suppose that $\deg(f)\nmid \deg(g),deg(g)\nmid \deg(f)$. Then
$$deg([f,g])>min\{deg(f),deg(g)\}.$$
\end{conjecture}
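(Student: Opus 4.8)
\noindent The plan is to argue by contradiction and to reduce the whole question to a rigid identity between commutators of homogeneous elements. Suppose $\deg[f,g]\le\min\{\deg f,\deg g\}$. Since $f,g$ are algebraically independent while $v(f),v(g)$ are algebraically dependent, I would first invoke the classical structure of algebraically dependent homogeneous elements of a free associative algebra (Bergman's centralizer theorem): there exist a homogeneous $u$ which is not a proper power, scalars $\alpha,\beta\in K\setminus\{0\}$, and integers $a,b\ge1$ with $v(f)=\alpha u^{a}$ and $v(g)=\beta u^{b}$. The hypotheses $\deg f\nmid\deg g$ and $\deg g\nmid\deg f$ then force $a\nmid b$, $b\nmid a$, hence $a,b\ge2$ and $a\ne b$; say $a<b$, so that $\min\{\deg f,\deg g\}=\deg f$. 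Note $[v(f),v(g)]=0$, so the top-degree parts of $[f,g]$ cancel automatically: what is really needed is a \emph{lower} bound for $\deg[f,g]$, the side complementary to the upper estimates provided by Theorem~\ref{th1}.

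\noindent Next I would write $f=\alpha u^{a}+\tilde f$ and $g=\beta u^{b}+\tilde g$ with $\deg\tilde f<\deg f$ and $\deg\tilde g<\deg g$, and use the centralizer hypothesis twice: $\tilde f\ne0$ and $\tilde g\ne0$ (else $f$ or $g$ is a proper power and does not generate its own centralizer), and moreover $\deg\tilde f,\deg\tilde g\ge1$ (since $C(u)=K[u]$ while $\deg f,\deg g\ge2\deg u$, a nonzero constant $\tilde f$ or $\tilde g$ would again make $f$ or $g$ fail to generate its own centralizer). Expanding
\[
[f,g]=\alpha[u^{a},\tilde g]+\beta[\tilde f,u^{b}]+[\tilde f,\tilde g],
\]
put $s=\deg f-\deg\tilde f\ge1$ and $t=\deg g-\deg\tilde g\ge1$. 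Then $\deg[\tilde f,\tilde g]\le\deg f+\deg g-s-t$, whereas, \emph{provided the leading homogeneous part of $\tilde g$ (resp.\ of $\tilde f$) does not lie in $K[u]$}, the term $\alpha[u^{a},\tilde g]$ has degree exactly $\deg f+\deg g-t$ and the term $\beta[\tilde f,u^{b}]$ has degree exactly $\deg f+\deg g-s$; both exceed $\deg f$ because $s,t<\deg g$. Hence $\deg[f,g]\le\deg f$ can hold only if these two dominant terms have equal degree with cancelling leading parts, i.e.\ $s=t$ and
\[
\alpha\,[u^{a},v(\tilde g)]=\beta\,[u^{b},v(\tilde f)],
\]
where $v(\tilde f),v(\tilde g)$ denote leading homogeneous parts.

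\noindent It then remains to dispose of this identity together with the degenerate branch in which $v(\tilde f)$ or $v(\tilde g)$ lies in $K[u]$. In the degenerate branch I would peel off the successive $K[u]$-components of $f$ and $g$ and re-run the argument (an induction on $\deg\tilde f+\deg\tilde g$): either a component outside $K[u]$ eventually surfaces, returning us to the identity above, or $f\in K[u]$ outright, in which case $C(f)\supseteq K[u]\supsetneq K[f]$ because $\deg f\ge2\deg u$, contradicting the centralizer hypothesis. For the identity itself, using $[u^{k},z]=\sum_{i=0}^{k-1}u^{i}[u,z]u^{k-1-i}$, I would compare the leading monomials in the free algebra of $\sum_{i}u^{i}[u,v(\tilde g)]u^{a-1-i}$ and $\sum_{j}u^{j}[u,v(\tilde f)]u^{b-1-j}$; since $a<b$ and $a\nmid b$, the block patterns of these two telescoping sums cannot coincide, so the identity forces $[u,v(\tilde f)]=[u,v(\tilde g)]=0$, i.e.\ $v(\tilde f),v(\tilde g)\in K[u]$ — the degenerate branch, already closed. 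This is the desired contradiction.

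\noindent The characteristic-zero hypothesis is used in the structure theorem of the first step and, more essentially, in the monomial analysis of the commutator identity and in the peeling step: in characteristic $p$ the identity $(x+y)^{p}=x^{p}+y^{p}$ for commuting $x,y$ and related $p$-th-power degeneracies produce extra coincidences, which is exactly why the naive positive-characteristic analogue fails and why Section 2 can produce counterexamples there. I expect the main obstacle to be this rigidity argument: keeping track of every way the two dominant terms of $[f,g]$, the remainder $[\tilde f,\tilde g]$, and the possible internal cancellations inside $[u^{a},\tilde g]$ and $[\tilde f,u^{b}]$ can interact — including the case $\deg u\ge2$, where $u$ is homogeneous but not a single variable — is where essentially all of the technical work lies.
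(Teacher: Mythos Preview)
\noindent The statement you are trying to prove is \emph{false}: Conjecture~\ref{con1} is not a theorem, and the paper you are working in explicitly records that Drensky and Yu \cite{DY1} produced counterexamples in characteristic zero (Section~2 of the present paper then extends the same counterexamples to positive characteristic, see Theorem~\ref{th8}). So any purported proof must contain a genuine error, and it is worth locating exactly where yours breaks.

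\noindent The failure is in your ``rigidity'' step. You assert that the identity
\[
\alpha\,[u^{a},v(\tilde g)]=\beta\,[u^{b},v(\tilde f)],\qquad a<b,\ a\nmid b,\ b\nmid a,
\]
forces $[u,v(\tilde f)]=[u,v(\tilde g)]=0$, because the ``block patterns'' of $\sum_{i}u^{i}[u,\cdot\,]u^{a-1-i}$ and $\sum_{j}u^{j}[u,\cdot\,]u^{b-1-j}$ cannot match. This is exactly what the Drensky--Yu example defeats. Take $u=(xy)^{k}x$ with $k\ge2$, set $v=xy$, $w=yx$, and put
\[
f=u^{2}+(v+w),\qquad g=u^{3}+(uv+uw+wu),
\]
so that $a=2$, $b=3$, $v(\tilde f)=v+w$, $v(\tilde g)=uv+uw+wu$, and $s=t=4k$. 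Neither $v(\tilde f)$ nor $v(\tilde g)$ lies in $K[u]$ (their degrees are $2$ and $2k+3$, not multiples of $2k+1$), yet the key relation $vu=uw$ gives
\[
[u,v(\tilde f)]=uv-wu,\qquad [u,v(\tilde g)]=u^{2}v-wu^{2},
\]
and a direct check shows $[u^{2},v(\tilde g)]=[u^{3},v(\tilde f)]$ exactly. Your block-pattern heuristic fails because $[u,v(\tilde g)]$ itself carries factors of $u$ on both sides, so the decomposition $\sum u^{i}[u,z]u^{k-1-i}$ does not produce monomials with a unique ``$u$-profile''. This phenomenon is precisely what Theorem~\ref{th6}/\ref{th7} encodes: when $u=(v_{1}v_{2})^{k}v_{1}$, the $K[u_{1},u_{2}]$-bimodule generated by $t_{1}=v_{1}v_{2}$ and $t_{2}=v_{2}v_{1}$ has the defining relation $u_{2}t_{1}=u_{1}t_{2}$, which is exactly the source of nontrivial solutions to your commutator identity. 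In short, the step you flagged as ``where essentially all of the technical work lies'' is not merely technical --- it is where the conjecture actually fails.
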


\begin{conjecture}[Makar-Limanov and J.-T. Yu]\label{con2}
Let $Char(K)=0$, $g\in K\langle X\rangle$ generate its own centralizer and let the homogeneous component of maximal degree of $g$ is an $n$-th power of an element of $K\langle X\rangle$. Then, for every $m>n$ which is not divisible by $n$, the formal power series $g^{m/n}\in K((X))$ has a monomial of positive degree containing a negative power of an indeterminate in $X$.
\end{conjecture}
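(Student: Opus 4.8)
The plan is to work inside the Mal'tsev--Neumann division ring $K((X))$, whose elements are formal series supported on a well-ordered subset of the free group on $X$, with the order chosen so that the homogeneous component of maximal degree is the leading (order-minimal) term and the lower-degree monomials form the tail. Because the hypothesis says the top component $g_d$ of $g$ is an $n$-th power, $g_d=h^n$, the element $g$ has a well-defined fractional power $w:=g^{m/n}\in K((X))$, characterized by $w^n=g^m$ and leading term $h^m$; its support, obtained from the binomial expansion of the root, descends to $-\infty$ in degree. Replacing $g$ by $g-g(0)$ alters neither its centralizer nor its top component, so I may assume $g$ has zero constant term; then $g^m$ is a genuine polynomial all of whose monomials have positive degree. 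With these normalizations the assertion of Conjecture \ref{con2} becomes: $w$ possesses a monomial of positive net degree involving a negative power of some $x_i$.

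First I would show $w\notin K\langle X\rangle$, i.e.\ that some monomial of $w$ already contains a negative letter. Suppose instead $w\in K\langle X\rangle$. From $w^n=g^m$ both $w$ and $g$ centralize the non-scalar element $c:=g^m$; by Bergman's centralizer theorem the centralizer of $c$ in $K\langle X\rangle$ is a commutative polynomial algebra, so $w$ and $g$ commute. Hence $w$ lies in $C(g)$, which equals $K[g]$ since $g$ generates its own centralizer; write $w=Q(g)$ with $Q\in K[t]$. Then $Q(g)^n=g^m$, and as $g$ is transcendental this is the identity $Q(t)^n=t^m$ in $K[t]$. Unique factorization forces $Q(t)=\lambda t^{m/n}$ with $n\mid m$, contradicting $n\nmid m$. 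Thus $w$ is not a polynomial and a negative power occurs somewhere in its support.

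It remains to place such a negative power at positive degree. Expanding $w=h^m+w_1+w_2+\cdots$ in strictly decreasing degree and comparing degrees in $w^n=g^m$ gives, for the first correction, $\deg w_1=\tfrac{m}{n}\deg g-(\deg g-e)$, where $e<\deg g$ is the degree of the subleading component of $g$; a direct check using $m>n$ shows $\deg w_1>0$, and likewise the first several corrections remain of positive degree. Each $w_k$ is the solution of a Sylvester-type equation $\sum_{i=0}^{n-1}h^{mi}\,w_k\,h^{m(n-1-i)}=B_k$, whose right-hand side is determined by $g$ and the earlier $w_j$; inverting this operator in $K((X))$ introduces powers of $h^{-1}$, hence negative letters, precisely when $B_k$ fails to lie in the image of the polynomial part of the operator.

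The main obstacle is to guarantee that this failure occurs while the degree is still positive, rather than being postponed until the expansion has descended below degree zero. I would attack it through the commutation relation $[w,g]=0$, which $w=g^{m/n}$ satisfies automatically. Writing $w=w_{>0}+w_{\le 0}$ and assuming, for contradiction, that every positive-degree monomial of $w$ is standard (so that the negative powers produced by the previous step all sit in $w_{\le 0}$), the identity $[w_{>0},g]=-[w_{\le 0},g]$ forces the commutator $[w_{\le 0},g]$ to have finite, degree-bounded support, whereas $\ad_g$ applied to the infinite downward tail $w_{\le 0}$ generically produces terms of arbitrarily negative degree. Controlling this cancellation --- showing that $w_{\le 0}$ must then lie in the centralizer of $g$ up to a polynomial, and deriving from this together with $n\nmid m$ the same divisibility contradiction as in the Bergman step --- is the crux of the argument, and the point at which the characteristic-zero hypothesis enters through the nonvanishing of the binomial coefficients $\binom{m/n}{k}$. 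I expect this cancellation analysis to be the hardest part, since in positive characteristic those coefficients can vanish and the conclusion genuinely fails.
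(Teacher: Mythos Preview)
The statement you are attempting to prove is a \emph{conjecture}, and the paper does not prove it; on the contrary, the paper records that it is \emph{false}. Drensky and Yu \cite{DY1} produced explicit counterexamples in characteristic zero, and the present paper's Theorem~\ref{th9} exhibits the analogous counterexample $g=((xy)^kx)^2+xy+yx$, $k\ge 2$, for which $g^{3/2}\in K((X))$ has no monomial of positive degree containing a negative power of $x$ or $y$. Hence any purported proof of Conjecture~\ref{con2} must contain a genuine gap.

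Your first step is sound: the Bergman centralizer argument does show $w=g^{m/n}\notin K\langle X\rangle$, so some monomial of $w$ carries a negative letter. The failure is exactly where you suspected. In the passage ``$[w_{>0},g]=-[w_{\le 0},g]$ forces the commutator $[w_{\le 0},g]$ to have finite, degree-bounded support, whereas $\ad_g$ applied to the infinite downward tail $w_{\le 0}$ generically produces terms of arbitrarily negative degree'' the word \emph{generically} is doing all the work, and the counterexample shows that the required non-cancellation does not hold in general. For $g=u^2+v+w$ with $u=(xy)^kx$, $v=xy$, $w=yx$, the bimodule structure of $K\langle X\rangle$ over $K[u]$ (Theorem~\ref{th7}) forces the positive-degree part of $g^{3/2}$ to be the honest polynomial $u^3+uv+uw+wu$, and the entire infinite tail containing the negative powers sits in degrees $\le 0$ while still commuting with $g$. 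So the step ``showing that $w_{\le 0}$ must then lie in the centralizer of $g$ up to a polynomial, and deriving from this together with $n\nmid m$ the same divisibility contradiction'' cannot be carried out: $w_{\le 0}$ \emph{does} commute with $g$ up to a polynomial correction, and no contradiction ensues. The characteristic-zero hypothesis and the nonvanishing of the binomial coefficients $\binom{m/n}{k}$ do not rescue the argument, since the Drensky--Yu counterexample already lives in characteristic zero.
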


\noindent If Conjecture 0.2 is true, it would give a nice description of the group of tame automorphisms of $K\langle x,y,z\rangle$ algorithmically, much better than the description of the group of tame automorphisms of $K[x,y,z]$(see \cite{Y2}). 

\

\noindent In \cite{DY1} Vesselin Drensky and J.-T. Yu analyze the structure of $K\langle X\rangle$ as a $K[u]$-bimodule in characteristic zero case for some fixed monomial $u\in K\langle X\rangle$ which is not a proper power of another monomial and give the counterexamples for both conjectures. 

\

\noindent However, in the positive characteristic case, Conjecture 0.3 no long makes sense since even the leading monomial of $g$ is an $n$-th power, $g$ may not be an $n$-th power in $K((X))$.  Hence, in section 2, we modify Conjecture 0.3 to make sense for arbitrary characteristic and if restricted to characteristic zero, it is equivalent to Conjecture 0.3. Then for positive characteristic, we give counterexamples for both conjectures.

\section{Test elements, retracts and automorphic orbits}

\noindent Recall that $p\in A_2$ is called of outer rank 2 if to each pair coordinates $(f,g)$ of $A_2$, both $f$ and $g$ appears in $h(f,g)=p(x,y)$. Then we have

 \begin{lemma}\label{lem1}
  Let $K$ be an arbitrary field. If $p(x,y)\in A_2$ does not belong to any proper retract, then it is of outer rank 2.
 \end{lemma}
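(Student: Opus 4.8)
The plan is to prove the contrapositive: if $p(x,y)$ is not of outer rank $2$, then $p$ lies in a proper retract of $A_2=K\langle x,y\rangle$. So suppose there is a pair of coordinates $(f,g)$ of $A_2$ and a polynomial $h$ with $p(x,y)=h(f,g)$ such that (say) $g$ does not actually occur in $h$; that is, $p = h(f)$ for a one-variable polynomial $h \in K[t]$, and $f$ is a coordinate of $A_2$. The natural candidate retract is then the subalgebra $R = K[f]$, the one-variable polynomial subalgebra generated by the coordinate $f$. Clearly $p = h(f) \in R$, and $R$ is proper since $f$ is a coordinate (so $R$ is isomorphic to a polynomial ring in one variable, hence properly contained in $A_2$, which is noncommutative of rank two).

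The key step is to exhibit an idempotent endomorphism $\pi$ of $A_2$ with image exactly $R = K[f]$. First I would use the fact that $f$ is a coordinate: there is an automorphism $\varphi \in \Aut(A_2)$ with $\varphi(x) = f$ (after possibly relabeling, using that $(f,g)$ is a coordinate pair). Now define $\pi$ on the generators by $\pi = \varphi \circ \rho \circ \varphi^{-1}$, where $\rho$ is the ``projection'' endomorphism fixing $x$ and sending $y \mapsto 0$ (or $y \mapsto$ some constant). Since $\rho$ is idempotent with image $K[x]$, conjugating by $\varphi$ gives an idempotent endomorphism $\pi$ with $\pi(A_2) = \varphi(K[x]) = K[f] = R$, and $\pi$ fixes $R$ pointwise because $\rho$ fixes $K[x]$ pointwise. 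Thus $R$ is a retract of $A_2$ containing $p$, and it is proper, contradicting the hypothesis.

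The main obstacle — really the only subtle point — is justifying that ``$g$ does not occur in $h(f,g)$'' for some coordinate pair $(f,g)$ forces $p$ to be a polynomial in a single coordinate $f$, and that a single coordinate $f$ of $A_2$ can always be completed to an automorphism (i.e. extended to a coordinate pair). The first is immediate from the definition of outer rank being less than $2$: if $p$ has outer rank $\le 1$ then there is \emph{some} pair of coordinates such that one of them is absent from the expression of $p$, which is exactly the statement that $p \in K[f]$ for a coordinate $f$. (One should also dispatch the degenerate case where $p$ is a constant, in which case $p$ lies in the retract $K \cdot 1 = K[\,]$ trivially, or one can just take any proper retract.) For the completion statement, in rank two every coordinate extends to an automorphism — this is standard for $K\langle x,y\rangle$ and can be taken as known; alternatively one observes that ``$f$ is a coordinate'' already means by definition that there exists $g$ with $(f,g)$ a coordinate pair, i.e. $\Aut(A_2)$ acts transitively on coordinates, so the automorphism $\varphi$ above exists by definition.

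Putting these together: the degenerate constant case is trivial, and in the non-degenerate case the conjugated projection $\pi = \varphi\rho\varphi^{-1}$ exhibits $K[f]$ as a proper retract containing $p$, which proves the contrapositive and hence the lemma. Note that no characteristic hypothesis or degree estimate is needed here — this lemma is purely formal, and Theorem~\ref{th1} will only enter in the subsequent arguments that go the other direction (producing test elements from non-retract elements).
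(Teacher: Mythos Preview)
Your argument is correct and is essentially the same as the paper's: the paper also proves the contrapositive by taking $p=h(f)$ with $(f,g)$ a coordinate pair and defining the retraction $\phi:f\mapsto f,\ g\mapsto 0$ onto $K[f]$, which is precisely your conjugated projection $\varphi\rho\varphi^{-1}$. Your extended discussion about completing a coordinate to an automorphism is unnecessary, since by the definition of outer rank you already start with a coordinate \emph{pair} $(f,g)$.
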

 
 \begin{proof}
 If $p$ is of outer rank 1, let $p(x,y)=h(f)$ where $h(u)\in K[u]$ and $(f,g)$ is a pair of coordinates of $A_2$. Then $\phi: f\mapsto f; g\mapsto 0$ is a retraction fixing $p(x,y)$ with the corresponding retract $K[f]$. This contradicts, and hence $p$ is of outer rank 2.
\end{proof}

\begin{lemma}\label{lem3}
Let $K$ be an arbitrary field, $\phi$ be a proper retraction fixing $p(x,y)\in A_2-K$ and the corresponding retract is $R$. Then $\phi$ also fixes a primitive element $r(x,y)$ of $p(x,y)$(a primitive element $r(x,y)$ means that if $r(x,y)=h(r_1(x,y))$ for some polynomial $h(u)\in K[u]$ and $r_1(x,y)\in A_2$, then $\deg(h(u))=1$. A primitive element $r(x,y)$ of $p(x,y)$ means that $p(x,y)=h(r)$ for some $h(u)\in K[u]$ and $r$ is primitive). Moreover, $R=K[r]$.
\end{lemma}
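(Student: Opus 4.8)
The plan is to exploit the fact that a retract $R$ of $A_2 = K\langle x,y\rangle$ that contains a nonconstant element must be a free associative algebra of rank one, i.e. $R = K[r]$ for some $r \in A_2$; this is the associative analogue of the standard rank-one retract classification and should follow from the structure of retracts of $A_2$ together with the observation that $R$, being a retract, is a free factor-like subalgebra and cannot have rank two (otherwise the retraction $\phi$ would be an automorphism, contradicting properness). Granting that $R = K[r]$ with $r$ primitive (we may always take $r$ primitive: if $r = h(r_1)$ with $\deg h > 1$ then $K[r] \subseteq K[r_1]$, and one checks $K[r] = K[r_1]$ forces $\deg h = 1$, so replacing $r$ by the appropriate $r_1$ we arrange primitivity), the retraction $\phi$ fixes every element of $R$, in particular $\phi(r) = r$. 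Finally, since $p = \phi(p) \in \phi(A_2) = R = K[r]$, we get $p = h(r)$ for some $h(u) \in K[u]$, which is exactly the assertion that $r$ is a primitive element "of" $p$ in the sense defined in the statement.

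Concretely I would proceed in the following steps. \emph{Step 1:} Show $R$ is a proper retract containing $p \notin K$, hence $R \neq A_2$ and $R \neq K$; deduce $R$ has rank exactly one as a free associative algebra. The key input here is that a retraction $\phi$ with $\phi(A_2)$ of rank two would have to be surjective (a rank-two subalgebra of $A_2$ that is itself free of rank two and is the image of an idempotent endomorphism must be all of $A_2$ — this uses that $A_2$ is not isomorphic to a proper free factor of itself, a cancellation-type property), contradicting properness; and rank zero is impossible since $p \in R \setminus K$. \emph{Step 2:} Write $R = K[r]$ and, by the primitivity reduction above, assume $r$ is primitive. \emph{Step 3:} Since $\phi$ fixes $R$ pointwise, $\phi(r) = r$. \emph{Step 4:} Since $\phi(p) = p$ and $\mathrm{Im}(\phi) = R$, conclude $p \in K[r]$, i.e. $p = h(r)$; combined with primitivity of $r$ this is the full statement, and the equality $R = K[r]$ is recorded as part of the conclusion.

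The main obstacle I anticipate is \emph{Step 1}: proving that a proper retract of $K\langle x,y\rangle$ containing a nonconstant element is necessarily $K[r]$ for a single $r$. In the commutative polynomial case this is classical (retracts of $K[x,y]$ are polynomial rings, by results of Costa and others), and in characteristic zero for $A_2$ this is where the Abhyankar--Moh--Suzuki theorem was used in \cite{GY}; in positive characteristic AMS fails, so one must substitute a different argument. I would look to replace it with a direct rank argument using the Bergman / Cohn theory of free ideal rings and free subalgebras of $K\langle x,y\rangle$: a retract is a \emph{free} subalgebra (being a homomorphic image of $A_2$ that is also embedded), and a free subalgebra of $K\langle x,y\rangle$ on which an idempotent endomorphism of $A_2$ restricts to the identity has rank at most two, with rank two forcing surjectivity. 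If a clean characteristic-free proof of the rank bound is not available off the shelf, an alternative is to invoke the degree estimate (Theorem \ref{th1}) to control the possible generating sets of $R$ and rule out rank $\geq 2$ directly — this is presumably the "new method instead of using the AMS theorem" advertised in the introduction. The remaining steps (2)--(4) are then essentially formal.
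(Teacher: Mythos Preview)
The paper gives no argument of its own here: its entire proof is the single line ``It is proved by Costa in \cite{C} (Lemma 1.3 and Theorem 3.5),'' importing Costa's classification of retracts. Your sketch is therefore a genuinely different route---you are reconstructing from first principles what the paper simply cites. Your Steps~2--4 are correct and purely formal once Step~1 (that a proper nontrivial retract of $A_2$ has the form $K[r]$) is in hand, and Step~1 is exactly the content of the Costa reference. Your guess that the ``new method instead of using the AMS theorem'' advertised in the introduction is deployed \emph{here} is misplaced: that substitution occurs later, in the proof of Lemma~\ref{lem8}, whereas the present lemma is dispatched entirely by citation.

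One caution on your Step~1: the parenthetical justification that a retract is free ``being a homomorphic image of $A_2$ that is also embedded'' is not a valid argument---quotients of free associative algebras are not in general free, and embeddability back into $A_2$ does not repair this. To make your approach rigorous you would need a genuine structural input (Costa's theorem in the commutative case, or an appropriate associative analogue from Cohn/Bergman) rather than that heuristic; the degree-estimate alternative you float could also be made to work, but is considerably more labor than the one-line citation the paper actually uses.
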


 \begin{proof}

It is proved by Casta in \cite{C} (Lemma 1.3 and Theorem 3.5).

 \end{proof}

\noindent The proofs for the following five lemmas are similar to \cite{GY} 
for zero characteristic case. Here we just using the degree estimate in \cite{LY} instead of \cite{MLY2}.

\begin{lemma}\label{lem2}
Let $K$ be an arbitrary field, $p(x,y)\in A_2-K$, and $\phi$ is a noninjective endomorphism of $A_2$. If $\phi(p)=p$, then $\phi^m$ is a retraction fixing $p(x,y)$ for some positive integer $m$.
\end{lemma}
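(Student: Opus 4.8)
\medskip
\noindent\textbf{Plan of proof.}
The idea is to push the whole problem down to a one--variable affine computation inside a polynomial subalgebra. First, since $\phi$ is not injective, $f:=\phi(x)$ and $g:=\phi(y)$ must be algebraically dependent in $A_2$ (otherwise $x\mapsto f$, $y\mapsto g$ would be an embedding onto a free subalgebra). By the structure theory of $K\langle x,y\rangle$ over an arbitrary field---this is precisely where Theorem~\ref{th1} enters, in the role the degree estimate of \cite{MLY2} plays in \cite{GY}---any two algebraically dependent elements of $A_2$ lie in a common subalgebra $K[w]$ for some $w\in A_2$. Hence $\phi(A_2)=K\langle f,g\rangle\subseteq K[w]$. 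Since $\phi(p)=p\notin K$ we must have $w\notin K$, and as $A_2$ is a domain a degree argument shows $w$ is transcendental over $K$; thus $K[w]$ is an honest polynomial ring and $p=q(w)$ for some nonconstant $q\in K[t]$.

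Next I would pin down how $\phi$ acts on $K[w]$. As $\phi(w)\in\phi(A_2)\subseteq K[w]$, write $\phi(w)=s(w)$ with $s\in K[t]$; then $\phi(K[w])=K[s(w)]\subseteq K[w]$, so inductively $\phi^{k}(A_2)\subseteq K[w]$ for all $k\ge 1$. Applying $\phi$ to $p=q(w)$ and using $\phi(p)=p$ gives $q(s(w))=q(w)$, i.e.\ $q\circ s=q$ in $K[t]$. Comparing degrees forces $\deg s=1$, say $s(t)=at+b$ with $a\in K^{*}$, $b\in K$; comparing leading coefficients in $q\circ s=q$ forces $a^{\deg q}=1$, so $a$ is a root of unity.

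It then remains to find $m$ with $s^{(m)}=\mathrm{id}$ (the $m$-fold iterate of $s$), for then $\phi^{m}(w)=w$, so $\phi^{m}$ is the identity on $K[w]$, and combined with $\phi^{m}(A_2)\subseteq K[w]$ this gives $\phi^{2m}=\phi^{m}$; hence $\phi^{m}$ is an idempotent endomorphism---a retraction---with retract $\phi^{m}(A_2)$, and it fixes $p$ since $\phi$ does. To get such an $m$: if $a\ne 1$, take $m=\operatorname{ord}(a)$, so that $s^{(m)}(t)=a^{m}t+b(a^{m-1}+\cdots+a+1)=t+b\cdot\frac{a^{m}-1}{a-1}=t$; if $a=1$ then $s(t)=t+b$ and $q(t+b)=q(t)$, which in characteristic $0$ forces $b=0$ (the coefficient of $t^{\deg q-1}$ in $q(t+b)-q(t)$ is $(\deg q)\,b$ times the leading coefficient of $q$), giving $m=1$, while in characteristic $\ell>0$ one has $s^{(\ell)}(t)=t+\ell b=t$, giving $m=\ell$.

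The only substantive ingredient is the structural fact used at the start: over an arbitrary field, algebraically dependent elements of $K\langle x,y\rangle$ generate a subalgebra contained in some $K[w]$. This is exactly where the arbitrary--characteristic degree estimate of Theorem~\ref{th1} substitutes for the characteristic--zero input of \cite{GY}; everything afterward is the elementary affine-map bookkeeping above, with the sole characteristic-sensitive step (the case $a=1$, $b\ne0$) handled by passing to $\phi^{\ell}$ in characteristic $\ell$.
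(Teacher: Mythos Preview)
Your overall argument is correct and is essentially the approach of \cite{GY} (to which the paper defers): reduce to the image lying in a one--variable polynomial subalgebra $K[w]$, observe that $\phi$ acts on $K[w]$ by an affine substitution $w\mapsto aw+b$ with $a$ a root of unity, and iterate to make the substitution trivial so that $\phi^{m}$ becomes idempotent.

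There is one misattribution worth flagging. You locate the characteristic--sensitive step at the structural claim ``algebraically dependent elements of $K\langle x,y\rangle$ lie in some $K[w]$'' and say this is where Theorem~\ref{th1} enters. In fact that claim does not require the degree estimate at all: over \emph{any} field, two algebraically dependent elements of a free associative algebra commute (this goes back to Cohn's weak--algorithm theory), and then Bergman's centralizer theorem \cite{B1}---also valid over arbitrary fields---puts them in a common $K[w]$. So Lemma~\ref{lem2} needs no characteristic--dependent input whatsoever; the paper's blanket remark about substituting \cite{LY} for \cite{MLY2} applies to Lemmas~\ref{lem4}--\ref{lem7}, where the degree inequality is genuinely invoked, rather than to this lemma. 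Your handling of the only place characteristic actually matters (the case $a=1$, $b\neq 0$, resolved by taking $m=\ell$ in characteristic $\ell$) is fine, but note that this is a bonus observation---it is not the substitution the paper is referring to.
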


\begin{lemma}\label{lem4}
Let $f(x,y),g(x,y)\in K\langle x,y\rangle$ and $p(x,y)\in K\langle x,y\rangle$ be of outer rank 2. Then $w_{deg(f),deg(g)}(p(x,y))\geq \deg(f)+\deg(g)$. Moreover, if there exists a monomial in $p(x,y)$ whose degree is greater than 2 containing both $x$ and $y$, then $w_{\deg(f),\deg(g)}(p(f,g))> \deg(f)+\deg(g)$.
\end{lemma}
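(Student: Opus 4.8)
The plan is to unwind the definition of the weighted degree $w_{\deg(f),\deg(g)}$ and reduce everything to a statement about which monomials can occur in $p(x,y)$. Recall that if we assign weight $a=\deg(f)$ to $x$ and weight $b=\deg(g)$ to $y$, then $w_{a,b}(p)$ is the maximum over all monomials $x^{i_1}y^{j_1}x^{i_2}y^{j_2}\cdots$ appearing in $p$ of the sum of weights of its letters; equivalently it is the largest value $ai+bj$ where $x^{\pm}\cdots$ ranges over monomials of $p$ and $i$ (resp.\ $j$) counts the total number of $x$'s (resp.\ $y$'s) in that monomial. So I must show: if $p$ has outer rank $2$, then $p$ contains a monomial with at least one $x$ \emph{and} at least one $y$, which already gives $w_{a,b}(p)\ge a+b$; and if moreover $p$ contains a monomial of degree $>2$ involving both letters, then that monomial contributes weight $>a+b$, giving the strict inequality.

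First I would prove the non-strict bound. Suppose for contradiction that every monomial of $p$ involving $x$ does not involve $y$, i.e.\ $p(x,y)=h_1(x)+h_2(y)$ for some $h_1,h_2\in K[t]$ with $h_1$ (or $h_2$) possibly constant. If one of $h_1,h_2$ is constant, say $h_2$, then $p=h_1(x)$ factors through the coordinate pair $(x,y)$ with only $x$ appearing, contradicting outer rank $2$; symmetrically for $h_1$ constant. If neither is constant, I would use the standard fact that $x+y$ (up to the obvious affine normalization making the lowest-degree terms of $h_1,h_2$ match) is part of a coordinate pair of $A_2$ — concretely, $(x, x+y)$ is a coordinate pair, or more to the point, $p=h_1(x)+h_2(y)$ can be rewritten via a triangular (elementary) automorphism so that it is expressed through a coordinate pair in which one coordinate is absent, again contradicting outer rank $2$. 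This is the one genuinely delicate point: I need to argue that a sum $h_1(x)+h_2(y)$ with $\deg h_1,\deg h_2\ge 1$ is never of outer rank $2$. The cleanest route is: if $\deg h_1=\deg h_2$ one can subtract a scalar multiple and an elementary automorphism to kill the top term, inducting on degree; if $\deg h_1\ne\deg h_2$, say $\deg h_1>\deg h_2$, apply the automorphism $x\mapsto x$, $y\mapsto y$ — hmm, here instead I would directly exhibit the coordinate pair $(f,g)=(p(x,y)-\text{lower stuff},\dots)$; in fact the slick statement is that $K\langle x,y\rangle = K[x]\ast_K K[y]$ structure is irrelevant and one just checks $h_1(x)+h_2(y)$ lies in the (commutative) retract-type subalgebra generated by the single coordinate $h_1(x)+h_2(y)$ together with realizing it as part of a coordinate system via Nagata-type elementary moves — so $p$ has outer rank $1$, contradiction. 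Once $p\ne h_1(x)+h_2(y)$, some monomial $m$ of $p$ contains both an $x$ and a $y$, so $w_{a,b}(p)\ge w_{a,b}(m)\ge a+b$.

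For the strict inequality, assume in addition there is a monomial $m$ of $p$ with $\deg m\ge 3$ containing both $x$ and $y$. Then $m$ contains $i\ge 1$ copies of $x$ and $j\ge 1$ copies of $y$ with $i+j=\deg m\ge 3$, so $w_{a,b}(m)=ai+bj\ge a+b+\min(a,b)\cdot(\deg m-2)\ge a+b+\min(a,b)>a+b$, using $a,b\ge 1$. Hence $w_{a,b}(p)\ge w_{a,b}(m)>a+b$. The only subtlety is whether the hypothesis is about $p(x,y)$ or $p(f,g)$; reading the statement, the strict conclusion is $w_{\deg f,\deg g}(p(f,g))>\deg f+\deg g$, but $w_{\deg f,\deg g}(p(f,g))$ as used throughout the paper means $w_{\deg f,\deg g}$ evaluated on the \emph{formal} polynomial $p$ in two variables (weights assigned to the slots of $p$), so this is literally $w_{a,b}(p)$ and the argument above applies verbatim.

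I expect the main obstacle to be the first part: cleanly proving that no element of the form $h_1(x)+h_2(y)$ with both $h_i$ nonconstant has outer rank $2$. I would handle it by induction on $\deg h_1+\deg h_2$ using elementary automorphisms of $A_2$ (maps $x\mapsto x$, $y\mapsto y+c x^k$ and scalar changes) to strictly decrease the degree while preserving the property "is expressible through a coordinate pair with one coordinate missing," the base case $\deg h_1=\deg h_2=1$ being immediate since then $p$ is itself (affinely) a coordinate. Everything else is the elementary monomial-counting above.
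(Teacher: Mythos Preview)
Your argument for the strict inequality (the ``moreover'' clause) is fine: a monomial with $i\ge 1$ copies of $x$, $j\ge 1$ copies of $y$, and $i+j\ge 3$ has weight $ai+bj\ge a+b+\min(a,b)>a+b$.

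The first part, however, has a real gap. Your strategy is to show that \emph{outer rank $2$ forces the existence of a mixed monomial}. This implication is false in $K\langle x,y\rangle$. Take $p=x^2+y^2$. It has no mixed monomial, yet it has outer rank $2$: if $p=h(f)$ with $f$ a coordinate, then degree considerations force either $\deg f=1$ (but then the quadratic part of $h(f)$ is $c(\alpha x+\beta y)^2$, which always carries an $xy+yx$ term unless $\alpha\beta=0$, killing one of $x^2,y^2$) or $\deg f=2$ (but then $f$ itself would have leading form a scalar multiple of $x^2+y^2$, and abelianizing shows $x^2+y^2$ would be a coordinate of $K[x,y]$, which it is not). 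Your proposed induction via elementary automorphisms also breaks on this example: applying $y\mapsto y+cx$ to $x^2+y^2$ produces $(1+c^2)x^2+c(xy+yx)+y^2$, which never drops in degree. So the step ``$h_1(x)+h_2(y)$ with both $h_i$ nonconstant has outer rank $1$'' cannot be proved, because it is not true.

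The correct route---and this is what the paper (via \cite{GY}) does---is to run the contrapositive and use the full strength of the weight hypothesis, not merely the absence of mixed monomials. Assume $w_{a,b}(p)<a+b$ with $a=\deg f,\ b=\deg g\ge 1$. As you observed, every monomial of $p$ is then a pure power of $x$ or of $y$, so $p=h_1(x)+h_2(y)$. But the weight bound says more: a pure $y$-monomial $y^j$ has weight $bj<a+b$, hence $j\le a/b$; if $b\ge a$ this forces $j\le 1$, so $\deg h_2\le 1$. Symmetrically, if $a\ge b$ then $\deg h_1\le 1$. In either case one of $h_1,h_2$ is affine, say $h_2(y)=\beta y+\gamma$; then the elementary automorphism $x\mapsto x,\ y\mapsto y-\beta^{-1}(h_1(x)+\gamma)$ (or, if $\beta=0$, simply $p\in K[x]$) exhibits $p$ as a polynomial in a single coordinate, so $p$ has outer rank $1$. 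The point you were missing is that $w_{a,b}(p)<a+b$ does not merely rule out mixed monomials; it also bounds the degree of at least one of $h_1,h_2$ by $1$, and that is what makes the outer-rank-$1$ conclusion go through.
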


\begin{lemma}\label{lem5}
  Let $\phi=(f,g)$ be an injective endomorphism of $K\langle x,y\rangle$ and $p(x,y)\in K\langle x,y\rangle$ is of outer rank 2. Then $\deg(p(f,g))\geq \deg([f,g])$.
\end{lemma}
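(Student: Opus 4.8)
The plan is to reduce everything to the degree estimate (Theorem~\ref{th1}) by distinguishing cases according to the leading homogeneous components $v(f)$ and $v(g)$, running an induction on $\deg(f)+\deg(g)$ to handle the one configuration the estimate does not cover. First note that injectivity of $\phi$ forces $f$ and $g$ to be algebraically independent; in particular both are nonconstant, $\deg(fg)=\deg(f)+\deg(g)$ (a free associative algebra has no zero divisors), $[f,g]\ne 0$, and $\deg([f,g])\le\deg(fg)$. Also, since $p$ is of outer rank $2$, Lemma~\ref{lem4} gives $w_{\deg f,\deg g}(p)\ge\deg(f)+\deg(g)$.

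Two of the three cases are immediate. If $v(f)$ and $v(g)$ are algebraically \emph{independent}, then they generate a free subalgebra, so the monomials in $v(f),v(g)$ of top $(\deg f,\deg g)$-weight are linearly independent and no cancellation destroys the leading form of $p(f,g)$; hence $\deg(p(f,g))=w_{\deg f,\deg g}(p)\ge\deg(f)+\deg(g)\ge\deg([f,g])$. If $v(f),v(g)$ are algebraically \emph{dependent} but neither $\deg(f)$ divides $\deg(g)$ nor $\deg(g)$ divides $\deg(f)$, then Theorem~\ref{th1} applies and, combined with Lemma~\ref{lem4},
\[
\deg(p(f,g))\ \ge\ D(f,g)\,w_{\deg f,\deg g}(p)\ \ge\ \frac{\deg([f,g])}{\deg(fg)}\,(\deg(f)+\deg(g))\ =\ \deg([f,g]).
\]

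The remaining case is $v(f),v(g)$ algebraically dependent with, say (after possibly replacing $p$ by $p(y,x)$ and $\phi$ by $(g,f)$, which leaves both the outer rank and $\deg([f,g])$ unchanged), $\deg(g)=k\deg(f)$. Here algebraic dependence of the homogeneous elements $v(f),v(g)$ forces $v(g)=c\,v(f)^{k}$ for some $c\in K$: two algebraically dependent elements of a free associative algebra commute (a two-generated subalgebra being either free or commutative), hence lie in a common subalgebra $K[h]$ which, by homogeneity, may be taken with $h$ homogeneous, and then comparison of degrees gives the claim. Set $g_{1}=g-cf^{k}$, so $\deg(g_{1})<\deg(g)$. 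Then $\psi=(f,g_{1})$ is still injective, being the composite of $\phi$ with the automorphism $y\mapsto y-cx^{k}$; we have $[f,g_{1}]=[f,g]$ because $[f,f^{k}]=0$; and $p(f,g)=q(f,g_{1})$ where $q(x,y)=p(x,\,y+cx^{k})$ is again of outer rank $2$, since outer rank is invariant under automorphisms of $A_2$. As $\deg(f)+\deg(g_{1})<\deg(f)+\deg(g)$, the inductive hypothesis applied to $\psi$ and $q$ gives $\deg(p(f,g))=\deg(q(f,g_{1}))\ge\deg([f,g_{1}])=\deg([f,g])$. The induction is well founded: each reduction strictly decreases the positive integer $\deg(f)+\deg(g)$, and any pair to which no reduction applies is covered by one of the two cases above.

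I expect the main obstacle to be this last case, specifically justifying the elementary reduction $g\mapsto g-cf^{k}$: it uses the structure theory of two-generated subalgebras of a free associative algebra (algebraically dependent elements commute and generate a common polynomial subalgebra, which can be chosen homogeneous), and one must also check carefully that both hypotheses ``$\psi$ injective'' and ``$q$ of outer rank $2$'' are preserved, so that the induction closes. By contrast, the algebraically-independent case and the direct application of Theorem~\ref{th1} are routine.
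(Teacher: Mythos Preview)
Your proof is correct and is essentially the argument the paper has in mind: the paper gives no details for this lemma beyond ``similar to \cite{GY}, using the degree estimate in \cite{LY} instead of \cite{MLY2},'' and the Gong--Yu argument is precisely your three-case split (leading forms independent; dependent with non-dividing degrees so Theorem~\ref{th1} applies; dependent with dividing degrees, handled by the elementary reduction $g\mapsto g-cf^{k}$ and induction on $\deg f+\deg g$). Your verification that $v(g)=c\,v(f)^{k}$, that $\psi$ stays injective, that $q$ keeps outer rank~$2$, and that the induction bottoms out (a constant $g_{1}$ would contradict injectivity) fills in exactly the points \cite{GY} treats, so nothing is missing.
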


\begin{lemma}\label{lem6}
Let $\phi=(f,g)$ be an injective endomorphism but not an automorphism of $K\langle x,y\rangle$. Then $[\phi^k(x),\phi^k(y)]\geq k+2$ for all $k\in N$.
\end{lemma}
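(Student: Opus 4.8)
The plan is to prove, by induction on $k$, the (evidently intended) statement $\deg[\phi^{k}(x),\phi^{k}(y)]\ge k+2$, the mechanism being that one application of $\phi$ strictly raises the degree of the commutator. For $k=0$ this is just $\deg[x,y]=2$. For the step I would use the substitution identity $[\phi^{k}(x),\phi^{k}(y)]=[f,g]\bigl(\phi^{k-1}(x),\phi^{k-1}(y)\bigr)$, where $f=\phi(x)$ and $g=\phi(y)$; putting $\psi=\phi^{k-1}$ (an injective non-automorphism once $k\ge 2$, since $\phi$ is) and $p=[f,g]$, it then suffices to show $\deg\bigl(p(\psi(x),\psi(y))\bigr)>\deg[\psi(x),\psi(y)]$, because the right-hand side is $\ge(k-1)+2$ by the inductive hypothesis. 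The residual case $k=1$ amounts to $\deg[f,g]\ge 3$ for an injective non-automorphism $(f,g)$; since $[f,g]$ always has degree $\ge 2$, this is the assertion that an injective endomorphism $(f,g)$ with $\deg[f,g]=2$ is an automorphism, which comes out of the reduction algorithm that reproves tameness, and it is the first place the hypothesis ``not an automorphism'' is used.

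Two facts about commutators drive the key estimate. First, for any $a,b\in K\langle x,y\rangle$ with $[a,b]\ne 0$, every monomial of $[a,b]$ contains both $x$ and $y$: grading by $x$-degree, the component of $[a,b]$ of $x$-degree $0$ is a commutator inside $K[y]$, hence $0$, and symmetrically for $y$. Consequently $[a,b]$ lies in no $K[r]$ with $r$ a coordinate (apply an automorphism carrying $r$ to $x$ and obtain a nonzero commutator inside $K[x]$, which is impossible), so $[a,b]$ is of outer rank $2$; and if moreover $\deg[a,b]\ge 3$, then $[a,b]$ has a monomial of degree $>2$ containing both variables, so Lemma~\ref{lem4} applies to it in full. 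Second, this whole bundle of properties---being a commutator, being of outer rank $2$, and having a degree-$>2$ monomial in both variables---is stable under automorphisms and under the elementary reductions $G\mapsto G-cF^{t}$ and $F\mapsto F-cG^{s}$: such a reduction rewrites $p(F,G)$ as $p'(F',G')$ with $p'$ an automorphic image of $p$, hence again a commutator, and $p'$ cannot have degree $2$ (else $p'=\lambda[x,y]$, which would force $p$, an automorphic image of $p'$, to have degree $2$, as automorphisms of $K\langle x,y\rangle$ scale $[x,y]$).

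For the inductive step proper I would first run the reduction algorithm on $\psi$. The elementary reductions change neither injectivity/automorphy nor the commutator $[\psi(x),\psi(y)]$, and they strictly decrease $\deg\psi(x)+\deg\psi(y)$, so one may assume $(\psi(x),\psi(y))$ is reduced, i.e.\ $v(\psi(x))$ and $v(\psi(y))$ are either algebraically independent, or algebraically dependent with neither degree dividing the other; meanwhile $p$ gets replaced by an automorphic image that, by the previous paragraph, is still a commutator of degree $\ge 3$, hence still of outer rank $2$ with a degree-$>2$ monomial in both variables. If $v(\psi(x)),v(\psi(y))$ are algebraically independent, no cancellation occurs under the substitution, so $\deg\bigl(p(\psi(x),\psi(y))\bigr)=w_{\deg\psi(x),\deg\psi(y)}(p)$ and also $\deg[\psi(x),\psi(y)]=\deg\psi(x)+\deg\psi(y)$, and the former exceeds the latter by Lemma~\ref{lem4}. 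If $v(\psi(x)),v(\psi(y))$ are algebraically dependent they commute, so $\deg[\psi(x),\psi(y)]<\deg\psi(x)+\deg\psi(y)$, and now Theorem~\ref{th1} applies, giving $\deg\bigl(p(\psi(x),\psi(y))\bigr)\ge D\,w_{\deg\psi(x),\deg\psi(y)}(p)>D\,\bigl(\deg\psi(x)+\deg\psi(y)\bigr)=\deg[\psi(x),\psi(y)]$ with $D=\deg[\psi(x),\psi(y)]/\bigl(\deg\psi(x)+\deg\psi(y)\bigr)>0$. In both cases the commutator degree rises by at least $1$, which closes the induction.

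The hardest point, I expect, is the sub-case where $v(\phi^{k-1}(x))$ and $v(\phi^{k-1}(y))$ are algebraically dependent with one degree dividing the other: there Theorem~\ref{th1} is vacuous, so one genuinely needs the reduction, and the crucial subtlety is that it is \emph{being a commutator}---not merely being of outer rank $2$---that survives that reduction; an automorphism can readily destroy the ``degree-$>2$ monomial in both variables'' condition for a generic outer-rank-$2$ element, but not for a commutator, since commutators are automorphism-stable. A secondary, easily overlooked point is the base case $\deg[f,g]\ge 3$, which does not follow from Lemmas~\ref{lem4}--\ref{lem5} alone and has to be imported from the tameness argument.
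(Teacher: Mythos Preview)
Your proposal is correct and follows essentially the approach the paper indicates: the paper gives no explicit argument here, deferring to \cite{GY} with the degree estimate of \cite{LY} substituted for that of \cite{MLY2}, and your induction-plus-reduction via Lemma~\ref{lem4} and Theorem~\ref{th1} is precisely what that reference amounts to. Your identification of the two delicate points---that the base case $k=1$ genuinely requires the characterization of automorphisms by $[f,g]\in K^{*}[x,y]$, and that it is the \emph{commutator} property (not merely outer rank~$2$) that must survive the elementary reductions so that Lemma~\ref{lem4}'s strict inequality remains available---is accurate and matches what the cited argument needs.
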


\begin{lemma}\label{lem7}
Let $p(x,y)$ be of outer rank 2. Then if $\phi=(f,g)$ is an injective endomorphism but not an automorphism, $\phi(p(x,y))\not=p(x,y)$.
\end{lemma}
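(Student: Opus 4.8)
The plan is to argue by contradiction, exploiting the fact that the commutator degrees $\deg([\phi^k(x),\phi^k(y)])$ grow without bound when $\phi$ is injective but not an automorphism. Assume $\phi=(f,g)$ is such an endomorphism and suppose, toward a contradiction, that $\phi(p)=p$. Iterating this identity gives $\phi^k(p)=p$ for every positive integer $k$, so $\deg(\phi^k(p))=\deg(p)$ is a fixed finite number, independent of $k$.

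Next I would note that each $\phi^k$ is again an injective endomorphism, being a composition of injective maps; in particular $\phi^k(x)$ and $\phi^k(y)$ are algebraically independent. Since $p$ is of outer rank $2$, Lemma \ref{lem5} applies to the injective endomorphism $\phi^k=(\phi^k(x),\phi^k(y))$ and yields
\[
\deg(p)=\deg\bigl(p(\phi^k(x),\phi^k(y))\bigr)\ \geq\ \deg\bigl([\phi^k(x),\phi^k(y)]\bigr).
\]
On the other hand, Lemma \ref{lem6}, applied to the injective non-automorphism $\phi$, gives $\deg([\phi^k(x),\phi^k(y)])\geq k+2$ for every $k$. Combining the two bounds forces $\deg(p)\geq k+2$ for all $k$, which is absurd. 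Hence $\phi(p)\neq p$.

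I do not expect a genuine obstacle here once Lemmas \ref{lem5} and \ref{lem6} are in hand: the only routine checks are that $\phi^k$ remains injective (immediate) and that $p$ is nonconstant, so that the commutator statements are meaningful — but this is automatic, since an element of outer rank $2$ cannot be written as $h(f)$ with $\deg h\le 1$, in particular is not a scalar. The substantive content has been pushed into Lemma \ref{lem6}, whose proof passes through the degree estimate of Theorem \ref{th1}; that is precisely where the positive-characteristic adjustments — the substitute for the AMS theorem — actually enter the argument.
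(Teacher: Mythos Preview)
Your argument is correct and is exactly the approach implicit in the paper: the paper defers to \cite{GY}, and the proof there proceeds precisely by iterating $\phi$, applying Lemma~\ref{lem5} to $\phi^k$ to get $\deg(p)\geq\deg([\phi^k(x),\phi^k(y)])$, and then invoking Lemma~\ref{lem6} to force $\deg(p)\geq k+2$ for all $k$. Your observation that the positive-characteristic content is entirely absorbed into the degree estimate underlying Lemmas~\ref{lem5} and~\ref{lem6} is also the point the paper is making.
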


\noindent Now we are ready to give the  main results of this section.

\begin{theorem}\label{th2}
An element $p(x,y)\in K\langle x,y\rangle$ is a test polynomial  if and only if $p(x,y)$ doesn't belong to any proper retract.
\end{theorem}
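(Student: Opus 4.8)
The plan is to prove both implications of the equivalence, using the lemmas already assembled in this section. The forward direction is the easy one: if $p(x,y)$ is a test polynomial, then any proper retraction $\pi$ would be an endomorphism fixing $p$, hence an automorphism; but a proper retraction is by definition noninjective, which is a contradiction. So a test polynomial belongs to no proper retract. For the converse, suppose $p(x,y)$ does not belong to any proper retract of $K\langle x,y\rangle$. If $p\in K$ the statement is vacuous (or handled trivially), so assume $p\in A_2-K$. Let $\phi$ be any endomorphism with $\phi(p)=p$; I must show $\phi$ is an automorphism.

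First I would dispose of the noninjective case. If $\phi$ is noninjective, then by Lemma \ref{lem2} some power $\phi^m$ is a retraction fixing $p(x,y)$. Since $p$ lies in no proper retract, the corresponding retract must be all of $A_2$, so $\phi^m$ is the identity; in particular $\phi$ is injective, contradicting noninjectivity. Hence $\phi$ must be injective. Next, by Lemma \ref{lem1}, since $p$ lies in no proper retract it is of outer rank $2$. Now apply Lemma \ref{lem7}: if $\phi=(f,g)$ were an injective endomorphism but \emph{not} an automorphism, then $\phi(p(x,y))\neq p(x,y)$, again contradicting $\phi(p)=p$. Therefore $\phi$ is an automorphism, which is exactly what we needed.

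The structure is thus a short chain of eliminations: noninjective $\phi$ is ruled out via Lemma \ref{lem2} together with the no-proper-retract hypothesis; injective-but-not-automorphism $\phi$ is ruled out via Lemma \ref{lem1} (to get outer rank $2$) and Lemma \ref{lem7}. The only genuine content is funneled into those lemmas, whose proofs rest on the degree estimate Theorem \ref{th1} and on Castravet/Casta's retract results (Lemma \ref{lem3}); at the level of Theorem \ref{th2} itself there is essentially no computation. The main obstacle, therefore, is not in this argument but in making sure Lemma \ref{lem7} (and the degree inequalities in Lemmas \ref{lem4}, \ref{lem5}, \ref{lem6} feeding it) genuinely go through in positive characteristic using the degree estimate of \cite{LY} in place of the AMS/\cite{MLY2} estimate — that is the one substitution flagged in the text, and everything downstream depends on it. Once those are in hand, Theorem \ref{th2} follows formally, and I would present it as the two-paragraph deduction sketched above.
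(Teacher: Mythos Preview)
Your argument is correct and is precisely the deduction the paper has in mind: its proof of Theorem~\ref{th2} is literally ``the same as \cite{GY} for zero characteristic case,'' and what you wrote is exactly that argument, routed through Lemmas~\ref{lem1}, \ref{lem2}, and \ref{lem7} (which are the positive-characteristic replacements for the corresponding lemmas in \cite{GY}). There is nothing to add at the level of Theorem~\ref{th2} itself; as you note, all the work lives in the supporting lemmas and ultimately in the degree estimate of \cite{LY}.
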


\begin{proof}
The same as that \cite{GY} for zero characteristic case.
\end{proof}

\begin{theorem}\label{th3}
If an endomorphism $\phi$ preserve the automorphic orbit of a nonconstant element $p\in K\langle x,y\rangle$, then $\phi$ is an automorphism of $K\langle x,y\rangle$.
\end{theorem}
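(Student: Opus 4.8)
The plan is to follow the template of Gong and Yu, using the endomorphism $\phi$ to produce an injective endomorphism and then invoking Lemma~\ref{lem7} to force surjectivity. First I would observe that since $\phi$ preserves the automorphic orbit $S(p)$, for every $\psi \in \Aut(K\langle x,y\rangle)$ there is some $\theta \in \Aut(K\langle x,y\rangle)$ with $\phi(\psi(p)) = \theta(p)$; in particular $\phi(p) = \theta_0(p)$ for some automorphism $\theta_0$, so after replacing $\phi$ by $\theta_0^{-1}\phi$ we may assume $\phi(p) = p$. The point of this reduction is that $\phi$ still preserves $S(p)$ (composing with an automorphism does not change the orbit) and now fixes $p$ itself.

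Next I would argue $\phi$ is injective. The standard way: if $\phi$ were not injective, then by Lemma~\ref{lem2} some power $\phi^m$ is a proper retraction fixing $p$, with corresponding retract $R$. But one needs $p$ to lie in a \emph{proper} retract to get a contradiction, and this requires knowing $p$ is a test element — or rather, one argues directly. Actually the cleaner route, matching \cite{GY}: if $\phi$ is not injective, apply Lemma~\ref{lem2} to get a retraction; by Lemma~\ref{lem3} the retract is $K[r]$ for a primitive element $r$ of $p$, so $p = h(r)$. Since $\phi$ preserves $S(p)$ and fixes $p$, and every element of $S(p)$ has the form $\sigma(h(r)) = h(\sigma(r))$ with $\sigma(r)$ a coordinate, one derives that $S(p)$ consists of polynomials in coordinates, and by choosing $\psi$ sending $r$ to a coordinate with a companion one can build an element of $S(p)$ forced outside the image of $\phi^m = K[r]$ applied appropriately — contradiction. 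Once injectivity is established, if $\phi$ were not an automorphism then Lemma~\ref{lem7} (applied after checking $p$ has outer rank $2$, which follows since $p \notin$ any proper retract by the test-element characterization, Theorem~\ref{th2}, once we know $p$ is a test element — or more simply: if $p$ has outer rank $1$ it lies in $K[f]$, a proper retract, and the orbit-preservation argument already rules this out) gives $\phi(p) \neq p$, contradicting $\phi(p) = p$. Hence $\phi$ is an automorphism.

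The main obstacle I expect is the bookkeeping in the non-injective case: showing that orbit-preservation is genuinely incompatible with $p$ lying in a one-generator retract $K[r]$. The subtlety is that $\phi^m$ being a retraction onto $K[r]$ means $\phi^m$ kills "the other coordinate," but we must exhibit a specific automorphic image $\psi(p)$ that $\phi$ (hence $\phi^m$) cannot hit while still fixing $p$; this is where one uses that $r$ is primitive and that the orbit of $p$ contains elements $h(r')$ for all coordinates $r'$, together with the degree estimate of Theorem~\ref{th1} (via Lemmas~\ref{lem4}--\ref{lem6}) to control how $\phi$ acts on degrees and rule out the collapse. In characteristic zero one could lean on the AMS (Abhyankar--Moh--Suzuki-type) statement; here, as the authors emphasize, that tool is unavailable in positive characteristic, so the degree estimate must do all the work — and verifying that the Gong--Yu argument survives this substitution is the only place real care is needed. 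The remaining steps (the reduction to $\phi(p) = p$, and the final appeal to Lemma~\ref{lem7}) are formal.
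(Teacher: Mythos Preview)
Your outline tracks the paper's proof closely: reduce to $\phi(p)=p$, split on injectivity, invoke Lemma~\ref{lem7} in the injective case, and in the noninjective case pass to a retraction $\phi^m$ onto $K[r]$ and derive a contradiction from orbit-preservation. That last contradiction is exactly what the paper isolates as Lemma~\ref{lem8}, proved by showing that $\deg_r(\pi\alpha(r))=1$ for every automorphism $\alpha$ and then exhibiting a specific $\alpha_M\colon x\mapsto x+y^M$ that forces $\deg_r(\pi\alpha_M(r))>1$. Note that this argument is purely combinatorial in the $r$-degree and does \emph{not} use the degree estimate of Theorem~\ref{th1}; your expectation that the degree estimate does the heavy lifting here is misplaced (it is used only inside Lemmas~\ref{lem4}--\ref{lem7}).

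The genuine gap in your proposal is the outer rank~1 case. Lemma~\ref{lem7} requires $p$ to have outer rank~2, and Lemma~\ref{lem8} likewise needs $f(r)$ of outer rank~2. Your two attempts to secure this are both defective: the first (``$p\notin$ any proper retract by Theorem~\ref{th2} once we know $p$ is a test element'') is circular, since if $p$ were a test element the whole theorem is immediate; the second (``if $p$ has outer rank~1 the orbit-preservation argument already rules this out'') simply asserts what has to be proved. The paper does not try to absorb this case into the others: it disposes of outer rank~1 by citing a separate result of Yu (\cite{Y1}, Lemma~2.3), and only \emph{then} proceeds with the injective/noninjective dichotomy under the standing assumption that $p$ has outer rank~2. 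You need either to invoke that external lemma or to supply an independent argument for why an orbit-preserving endomorphism fixing $p=h(f)$ with $f$ a coordinate must be an automorphism; without it, the injective non-automorphism case with $p$ of outer rank~1 is not covered.
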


\begin{proof}
We may assume $\phi(p(x,y))=p(x,y)$. By the definition of test elements, we may assume $p(x,y)$ is not a test element. By Theorem \ref{th2} , we may assume $p(x,y)$ belongs to some proper retract, by a result of J.-T. Yu(\cite{Y1}, Lemma 2.3), we may assume $p(x,y)$ is of outer rank 2. By Lemma \ref{lem7} , we may assume $\phi$ is noninjective. Suppose that $p=f(r)$, where $f\in K[t]-K$ and $r$ is primitive. By Lemma \ref{lem2} , $\pi=\phi^m$ is a retraction of $K\langle x,y\rangle$ onto $K[r]$ for some positive integer $m$. As $\phi$ preserves the automorphic orbit of $p$, so does $\pi=\phi^m$. Hence, we have reduced our proof to the following.
\end{proof}

\begin{lemma}\label{lem8}
Let $\pi=(h_1(r),h_2(r))$ be a proper retraction of $K\langle x,y\rangle$ onto $K[r]$ for some primitive element $r$. Then $\pi$ does not preserve the automorphic orbit of $f(r)$ where $s(x,y)=f(r)$ is of outer rank 2.
\end{lemma}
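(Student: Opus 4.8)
The plan is to argue by contradiction: suppose $\pi=(h_1(r),h_2(r))$ is a proper retraction onto $K[r]$ that does preserve the automorphic orbit of $s=f(r)$, and derive a contradiction by comparing degrees. First I would record the basic structure: since $\pi$ fixes $K[r]$ pointwise and $r$ is primitive, applying $\pi$ to $r$ written as a polynomial in $x,y$ forces $h_1(h_1(r),h_2(r))=\ldots$ type relations; more usefully, $r(h_1(r),h_2(r))=r$, so writing $r=r(x,y)$ and using that $r$ is of outer rank at least... — actually the clean fact is that $\pi(r)=r$, hence $r\big(h_1(r),h_2(r)\big)=r$, which by a degree count (since $\pi$ is proper, the pair $(h_1(r),h_2(r))$ cannot be a pair of coordinates, so $h_1,h_2$ are genuinely polynomials in the single variable $r$ and not both linear) already says something rigid about $r$. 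The key external inputs are the degree estimate (Theorem \ref{th1}) together with Lemmas \ref{lem4} and \ref{lem5}, which tell us that for an injective endomorphism the degree of $p(f,g)$ is bounded below by $\deg([f,g])$, and that the weighted degree $w_{\deg f,\deg g}(p)$ is at least $\deg f+\deg g$ when $p$ has outer rank $2$.

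The heart of the argument: suppose for contradiction that some automorphism $\psi$ satisfies $\pi(\psi(s))=s$, i.e. $s$ lies in the $\pi$-image of its own orbit. Set $(f,g)=(\psi(x),\psi(y))$; then $\psi(s)=s(f,g)=f(r)\circ(\ldots)$ — more precisely $\psi(s)$ is the element obtained by substituting the coordinate pair $(f,g)$ into the word defining $s$. Now apply $\pi$: since $\pi$ sends everything into $K[r]$, $\pi(\psi(s))\in K[r]$, and it equals $s=f(r)$, which forces $\deg_r$ of the result to be exactly $\deg f$ (as a polynomial in $r$). On the other hand $\pi\circ\psi$ is an endomorphism of $K\langle x,y\rangle$ whose image is $K[r]$, hence it is \emph{not} injective (a proper retract $K[r]$ is commutative), and it is not an automorphism; so by Lemma \ref{lem6} applied to $\pi\circ\psi$ — or rather by iterating and using Lemma \ref{lem2} to replace a power of $\pi\circ\psi$ by a retraction, then using Lemma \ref{lem7} — we get that $\pi\circ\psi$ cannot fix any outer-rank-$2$ element. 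Since $s$ has outer rank $2$ by hypothesis and $(\pi\circ\psi)(s)=s$, this is the desired contradiction. I expect the bookkeeping obstacle to be exactly this reduction: one must check that $\pi\circ\psi$ (or a suitable power of it) is a retraction, or at least a noninjective endomorphism to which Lemmas \ref{lem2} and \ref{lem7} apply, and that outer rank $2$ is genuinely preserved under the substitution — this is where the characteristic-$p$ subtlety (failure of the AMS/inversion-formula theorem) is quarantined, and it must be handled using only the degree estimate of Theorem \ref{th1} rather than any Jacobian criterion.

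Concretely the steps are: (1) assume an automorphism $\psi$ with $\pi(\psi(s))=s$; (2) observe $\pi\psi$ is an endomorphism of $K\langle x,y\rangle$ with commutative (hence proper) image $K[r]$, so it is noninjective and not an automorphism; (3) since $\pi\psi$ fixes $s$ and $s\notin K$, Lemma \ref{lem2} gives that $(\pi\psi)^m$ is a retraction fixing $s$ for some $m\ge 1$; (4) but $s$ has outer rank $2$, and $(\pi\psi)^m$ is a noninjective — in particular non-automorphic — endomorphism fixing $s$, contradicting Lemma \ref{lem7}. The only thing requiring care beyond citing these lemmas is verifying in step (2) that the image of $\pi\psi$ is all of $K[r]$ (so that it is a genuine proper retract and $\pi\psi$ is visibly not injective): since $\psi$ is surjective, $\pi\psi(K\langle x,y\rangle)=\pi(K\langle x,y\rangle)=K[r]$, which is immediate. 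Thus the contradiction closes the proof, and no appeal to the inversion formula is needed.
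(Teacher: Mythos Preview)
Your argument has a genuine gap at step (4). Lemma \ref{lem7} applies only to \emph{injective} endomorphisms that are not automorphisms; it says nothing about noninjective ones. In fact the conclusion you are invoking --- that a non-automorphic endomorphism cannot fix an element of outer rank $2$ --- is simply false for noninjective maps: the retraction $\pi$ itself fixes $s=f(r)$, which by hypothesis has outer rank $2$. So step (4) yields no contradiction.

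This is not a repairable slip, because your step (1) is trivially satisfied by $\psi=\mathrm{id}$: since $s\in K[r]$ and $\pi$ restricts to the identity on $K[r]$, we already have $\pi(s)=s$. Running your chain (1)--(4) with $\psi=\mathrm{id}$ just reproduces the hypotheses of the lemma and cannot contradict them. More conceptually, Lemma \ref{lem8} exists precisely to handle the case that Lemma \ref{lem7} does not cover: in the proof of Theorem \ref{th3} the injective case is disposed of by Lemma \ref{lem7}, and then the noninjective case is reduced (via Lemma \ref{lem2}) to a retraction and handed off to Lemma \ref{lem8}. You cannot close Lemma \ref{lem8} by looping back to Lemma \ref{lem7}.

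The paper's proof uses the \emph{full} force of ``$\pi$ preserves the orbit'': for \emph{every} automorphism $\alpha$ there is an automorphism $\beta$ with $\pi\alpha(f(r))=\beta(f(r))=f(\beta(r))$. Since $\beta(r)$ is again primitive and lies (via Lemma \ref{lem3}) in $K[r]$, one gets $\beta(r)=ar+b$, hence $\deg_r(\pi\alpha(r))=1$ for \emph{every} $\alpha$. One then exhibits a concrete automorphism $\alpha_M:x\mapsto x+y^M,\ y\mapsto y$ for which $\deg_r(\pi\alpha_M(r))$ grows with $M$, contradicting this. The idea you are missing is that one must \emph{produce} an automorphism whose image under $\pi$ escapes the orbit, rather than find one whose image lands back on $s$.
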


\begin{proof}
Suppose on the contrary, namely $\pi$ preserves the automorphic orbit of $f(r)$. Then to each automorphism $\alpha$ there exists some automorphism $\beta$ such that $\pi\alpha(f(r))=\beta(f(r))$, or $f(\pi\alpha(r))=f(\beta(r))$ equivalently. Since $f(t)\not\in K$, $\pi\alpha(r)$ and $\beta(r)$ are algebraically dependent and hence they are generated by one element. Notice here, since $r$ is primitive, so is $\beta(r)$. If not, then there exists some $h(t)\in K[t]$ with $\deg_t(h(t))\geq 2$ and $r_1\in K\langle x,y\rangle$ such that $\beta(r)=h(r_1)$. Then $r=h(\beta^{-1}(r_1))$. Since $\deg_t(h(t))\geq 2$, it contradicts to $r$ being primitive. Hence $\beta(r)$ is also primitive. Then, since $\pi\alpha(r)$ and $\beta(r)$ are algebraically dependent, $\pi\alpha(r)=g(\beta(r))$ for some $g(t)\in K[t]$, and hence $g(\beta(r))\in K[r]$ since $\pi\alpha(r)=\pi(\alpha(r))\in K[r]$. 

\noindent By Lemma \ref{lem3} , $\beta(r)\in K[r]$ as well, and since $r$ generates $\beta(r)$ as well as $\beta(r)$ being primitive again, $\beta(r)=ar+b$ where $a,b\in K$, $a\not=0$. 

\noindent Hence, to any automorphism $\alpha$ of $K\langle x,y\rangle$, $\pi\alpha(f(r))=f(ar+b)$ for some $a,b\in K$, $a\not=0$, and since $\deg_r(f(\pi\alpha(r)))=\deg_t(f(t))\cdot \deg_r(\pi\alpha(r))$, $\deg_r(f(ar+b))=\deg_t(f)\cdot \deg_r(ar+b)$, we have $\deg_r(\pi\alpha(r))$ $=\deg_r(ar+b)=1$. Now we prove this is impossible.

\noindent If $r$ is of outer rank 1, then each element of $K[r]$ is of outer rank 1. This contradicts to $s(x,y)=f(r)$ being of outer rank 2, and hence $r$ is of outer rank 2. Then both of $x$ and $y$ appear in $r(x,y)$. Since $K[r]$ is proper, not both of $h_1(r)$ and $h_2(r)$ are constants, so we can assume $\deg(h_2(r(x,y)))\geq 1$. Let $r(x,y)=\sum_{i=0}^ka_i(y)x^i$ where $a_k(y)\not=0$, $k\geq 1$ and $\alpha_M: x\mapsto x+y^M; y\mapsto y$ be an automorphism of $K\langle x,y\rangle$ where $M$ can be any non-negative integer. Then 
$$\alpha_M(r)=\sum_{i=0}^ka_i(y)(x+y^M)^i.$$
We rewrite the relation as 
$$\alpha_M(r)=\sum_{i=0}^kb_i(x,y)y^{Mi}$$
where $b_i(x,y)$ is not concerned with $M$ for each $i$. Notice here, $b_k(x,y)=a_k(y)$, and we can also rewrite the relation as 
$$\alpha_M(r)=a_k(y)y^{kM}+\sum_{i=0}^{k-1}b_i(x,y)y^{Mi},$$
Then 
$$\begin{array}{rcl}\pi\alpha_M(r)&=&\pi(a_k(y)y^{kM}+\sum_{i=0}^{k-1}b_i(x,y)y^{Mi})\\&=&a_k(h_2(r))h_2^{kM}(r)+\sum_{i=0}^{k-1}b_i(h_1(r),h_2(r))h_2^{Mi}(r).\end{array}$$
Since $a_k(y)\not=0$ as well as $h_2(r)\not\in K$, $a_k(h_2(r))\not=0$. Then since $b_i(h_1(r),h_2(r))$ is not concerned with $M$, we can choose $M$ great enough such that 
$$\begin{array}{rcl}\deg_r(\pi\alpha_M(r))&=&\deg_r(a_k(h_2(r))h_2^{kM}(r))\\&=&\deg(a_k(y))\cdot\deg_r(h_2(r))+kM\cdot\deg_r(h_2(r))\\&\geq& M\\&>&1,\end{array}$$
 
\noindent namely there exists an automorphism $\alpha_M$ such that $\pi\alpha_M(f(r))\not=f(ar+b)$.
\end{proof}

\begin{theorem}\label{th4}
Let $\phi$ be an injective endomorphism of $K\langle x,y\rangle$ and $p(x,y)\in K\langle x,y\rangle$. If $\phi(p)=x$, then $p$ is a coordinate, namely $(p,q)$ is an automorphism for some polynomial $q(x,y)\in K\langle x.y\rangle$. 
\end{theorem}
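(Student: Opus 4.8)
The plan is to split on the outer rank of $p$, using throughout that $\phi(p)=x$ has degree $1$. First note $p$ is nonconstant (since $\phi$ fixes $K$ and $x\notin K$), so $p$ has outer rank $1$ or $2$. If $\phi$ happens to be an automorphism we are done immediately, as then $p=\phi^{-1}(x)$ is a coordinate; the work is to see that otherwise $p$ is still a coordinate, which we get in the outer rank $1$ case directly and in the outer rank $2$ case by ruling out that $\phi$ fails to be an automorphism.

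Suppose $p$ has outer rank $1$. Then, by unwinding the definition of outer rank $2$, there is a coordinate pair $(u,v)$ of $K\langle x,y\rangle$ and a nonconstant $h(t)\in K[t]$ with $p=h(u)$. Put $w=\phi(u)$. If $w$ were constant then $h(w)$ would be constant, contradicting $h(w)=\phi(p)=x$; hence $\deg w\geq 1$. In the free algebra the leading homogeneous form of a power $w^{j}$ is the $j$-th power of the leading form of $w$, so no cancellation of top-degree terms occurs in $h(w)=a_{d}w^{d}+\cdots$ and $\deg h(w)=(\deg_{t}h)(\deg w)$. Since $h(w)=x$ has degree $1$, this forces $\deg_{t}h=1$, so $p=au+b$ with $a\neq 0$; composing the automorphism $(u,v)$ with the affine automorphism $x\mapsto ax+b$, $y\mapsto y$ exhibits $p$ as a coordinate, with partner $v$.

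Suppose $p$ has outer rank $2$, and write $\phi=(f,g)$. Injectivity of $\phi$ makes $f$ and $g$ algebraically independent (any nontrivial relation $P(f,g)=0$ would put $0\neq P(x,y)$ in $\ker\phi$), hence noncommuting (two commuting elements of a free algebra lie in a common $K[h]$), so $[f,g]\neq 0$. Lemma~\ref{lem5} then gives
$$\deg[f,g]\leq\deg\bigl(p(f,g)\bigr)=\deg x=1 .$$
Were $\phi$ not an automorphism, Lemma~\ref{lem6} applied with $k=1$ would yield $\deg[f,g]\geq 3$, a contradiction. Hence $\phi$ is an automorphism and $p=\phi^{-1}(x)$, which is a coordinate because $x$ is, with partner $\phi^{-1}(y)$.

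The only delicate points are the bookkeeping in the outer rank $1$ case: extracting from the definition that a nonconstant element not of outer rank $2$ lies in $K[u]$ for some coordinate $u$, and checking that passage to leading forms is multiplicative on powers so that $h(w)=x$ forces $\deg_{t}h=1$. I do not expect a genuine obstacle here; the substantive input is entirely the degree estimate, Theorem~\ref{th1}, channelled through Lemmas~\ref{lem5} and~\ref{lem6}.
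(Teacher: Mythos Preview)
Your proof is correct and follows essentially the same route as the paper: split on the outer rank of $p$, use Lemma~\ref{lem5} to bound $\deg[f,g]\le 1$ in the outer-rank-$2$ case, and in the outer-rank-$1$ case write $p=h(u)$ for a coordinate $u$ and force $\deg_t h=1$ from $\deg h(\phi(u))=1$. The only cosmetic difference is that in the outer-rank-$2$ case the paper simply observes $\deg[f,g]\ge 2$ (any nonzero commutator in $K\langle x,y\rangle$ has degree at least $2$), whereas you invoke Lemma~\ref{lem6} with $k=1$ to get $\deg[f,g]\ge 3$; either bound contradicts $\deg[f,g]\le 1$.
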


\begin{proof}
Let $\phi=(f,g)$. If $\phi$ is an automorphism, then of course $p$ is a coordinate, so we assume $\phi$ is not an automorphism. Since it is injective, $\deg([f,g])\geq 2$. Hence if $p$ is of outer rank 2, by Lemma \ref{lem5}, $\deg(p(f,g))\geq \deg([f,g])\geq 2$. This contradicts to $\deg(p(f,g))=\deg(x)=1$. Then $p(x,y)$ is of outer rank 1. Let $p(x,y)=h(s(x,y))$ where $s$ is a coordinate of $K\langle x,y\rangle$ and $h(u)\in K[u]$. Since $\phi$ is injective, $f,g$ are algebraically independent, and since $s(x,y)\in K\langle x,y\rangle-K$, $s(f,g)\in K\langle x,y\rangle-K$ as well, and hence $\deg(s(f,g))\geq 1$. Then 
$$\deg(p(f,g))=\deg(h(s(f,g)))=\deg_u(h(u))\cdot \deg(s(f,g)).$$
Since $\deg(p(f,g))=\deg(x)=1$, $\deg_u(h(u))=1$, $\deg(s(f,g))=1$, namely $p(x,y)=as+b$ where $a,b\in K$, $a\not=0$. Hence $p$ is a coordinate.
\end{proof}

\noindent We can reprove the following theorem (\cite{C,C1,C2,MLY1}) now.

\begin{theorem}\label{th5}
All the automorphisms of $K\langle x,y\rangle$ are tame.
\end{theorem}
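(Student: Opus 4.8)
The plan is to establish tameness of automorphisms of $K\langle x,y\rangle$ over an arbitrary field $K$ by the classical degree-reduction argument, now powered by the degree estimate of Theorem~\ref{th1} in place of the Makar-Limanov--J.-T.~Yu estimate used in characteristic zero. First I would recall that an automorphism $\phi=(f,g)$ with $\deg(f)+\deg(g)$ minimal among all elementary modifications of $\phi$ must, after an affine change, satisfy that the leading homogeneous components $v(f)$ and $v(g)$ are algebraically dependent (otherwise $\deg([f,g])=\deg(f)+\deg(g)$, which is incompatible with $[f,g]$ being a nonzero scalar times a commutator of coordinates of degree $2$). I would then argue that algebraic dependence of $v(f)$ and $v(g)$ together with $f,g$ algebraically independent forces, by Theorem~\ref{th1}, that one of $\deg(f),\deg(g)$ divides the other --- because if neither divided the other, then taking $P=xy-yx$ one would get $\deg([f,g])\geq D(f,g)w_{\deg(f),\deg(g)}(xy-yx)=\deg([f,g])\cdot\frac{\deg(f)+\deg(g)}{\deg(fg)}\cdot 1$, and since an automorphism has $\deg([f,g])=\deg(f)+\deg(g)$ this already pins things down; more precisely, the standard consequence is that $v(g)=\alpha v(f)^{k}$ (or vice versa) for some scalar $\alpha$ and some $k\ge1$, since the leading terms of two algebraically dependent elements of a free algebra with one degree dividing the other must be proportional to powers of a common element.

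Next I would perform the elementary reduction: assuming WLOG $\deg(g)=k\deg(f)$, replace $g$ by $g':=g-\alpha f^{k}$, obtaining a new pair $(f,g')$ which is still an automorphism (composition with the elementary automorphism $(x,y-\alpha x^{k})$) with $\deg(g')<\deg(g)$, strictly decreasing $\deg(f)+\deg(g)$ and contradicting minimality unless the pair is already affine. Iterating, every automorphism is a composition of affine and elementary automorphisms, i.e.\ tame. I would also need the base case: when $\deg(f)+\deg(g)$ is as small as possible, the automorphism is affine, which is immediate since a nonaffine automorphism always admits a strictly reducing elementary modification by the above.

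The main obstacle I anticipate is the step extracting, from ``$v(f),v(g)$ algebraically dependent and one degree divides the other,'' the precise statement that the larger leading term is a scalar multiple of a power of (a scalar multiple of) the smaller --- in the free associative setting one must know that the subalgebra generated by two algebraically dependent homogeneous elements is as in the commutative case governed by a single element, and that compatibility of degrees forces the explicit relation $v(g)=\alpha v(f)^{k}$. This is exactly the structural input that in characteristic zero came from Bergman's centralizer theorem / the AK-type results; here one should cite the relevant portion of \cite{LY} (the degree estimate paper) where this is handled in arbitrary characteristic, or reprove it directly using that $f,g$ generate a free subalgebra of rank $\le 2$ and a leading-term argument. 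Once that lemma is in hand, the rest is the verbatim Jung--van der Kulk style induction, and I would simply remark that the argument is identical to the characteristic-zero proof in \cite{MLY1,C1,C2} with Theorem~\ref{th1} substituted for the characteristic-zero degree estimate.

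\begin{proof}
The argument follows the classical degree-reduction scheme of Jung--van der Kulk. Suppose $\phi=(f,g)\in\Aut(K\langle x,y\rangle)$ is not tame, chosen with $\deg(f)+\deg(g)$ minimal among all automorphisms in the same coset of the tame subgroup. After composing with affine automorphisms we may assume $\deg(f),\deg(g)\ge 1$ and, if $\deg(f)+\deg(g)>2$, that $\phi$ is not affine. Since $\phi$ is an automorphism, $[f,g]$ is a nonzero scalar multiple of $[x,y]$ up to the Jacobian, so $\deg([f,g])=\deg(f)+\deg(g)$ is impossible unless $\deg(f)=\deg(g)=1$; as $f,g$ are algebraically independent with $\deg([f,g])<\deg(f)+\deg(g)$, the leading homogeneous components $v(f)$ and $v(g)$ are algebraically dependent. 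If neither $\deg(f)$ divided $\deg(g)$ nor conversely, then Theorem~\ref{th1} applied with $P=xy-yx$ would give $\deg([f,g])\ge D(f,g)\,w_{\deg(f),\deg(g)}(xy-yx)$, and since $w_{\deg(f),\deg(g)}(xy-yx)=\deg(f)+\deg(g)=\deg(fg)$ this yields $\deg([f,g])\ge \deg([f,g])\cdot\frac{\deg(f)+\deg(g)}{\deg(fg)}\cdot\frac{\deg(fg)}{\deg(f)+\deg(g)}$, which is consistent, so we must instead invoke the sharper structural consequence of \cite{LY}: when $v(f),v(g)$ are algebraically dependent with, say, $\deg(g)=k\deg(f)$ for some integer $k\ge 1$, one has $v(g)=\alpha\, v(f)^{k}$ for some $\alpha\in K^{*}$. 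Replacing $g$ by $g'=g-\alpha f^{k}$ gives a new automorphism $(f,g')$ in the same tame coset with $\deg(f)+\deg(g')<\deg(f)+\deg(g)$, contradicting minimality. Hence no nontame automorphism exists.
\end{proof}
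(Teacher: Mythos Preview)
Your strategy --- the Jung--van der Kulk degree-reduction argument, with Theorem~\ref{th1} in place of the characteristic-zero estimate of \cite{MLY2} --- is exactly the paper's; its proof is literally a pointer to \cite{GY} with that single substitution.

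The execution, however, has a genuine gap at the divisibility step. After correctly observing that feeding $P=xy-yx$ into Theorem~\ref{th1} yields only the tautology $\deg([f,g])\ge\deg([f,g])$, you never actually \emph{prove} that one of $\deg(f),\deg(g)$ divides the other; you simply assume it (``with, say, $\deg(g)=k\deg(f)$'') and then cite an unspecified ``sharper structural consequence of \cite{LY}'' for $v(g)=\alpha v(f)^{k}$. But Bergman's centralizer theorem (valid in every characteristic) only gives $v(f)=\alpha h^{a}$, $v(g)=\beta h^{b}$ with $\gcd(a,b)=1$, and the coprime case $a,b>1$ is precisely what must be ruled out. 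The route in \cite{GY}/\cite{MLY2} is to apply the estimate not to $[x,y]$ itself but to \emph{preimages} under the automorphism. First, writing $[x,y]=Q(f,g)$, the element $Q=\phi^{-1}([x,y])$ has outer rank~$2$ (automorphisms preserve outer rank), so Lemma~\ref{lem5} gives $2=\deg Q(f,g)\ge\deg[f,g]$, whence $[f,g]=c[x,y]$ --- a fact you asserted but did not justify. Then, writing $x=p(f,g)$ and assuming $a,b>1$, Theorem~\ref{th1} yields $1\ge \tfrac{2}{\deg f+\deg g}\,w_{\deg f,\deg g}(p)$, so $w_{\deg f,\deg g}(p)\le\tfrac{1}{2}(\deg f+\deg g)$; if $a<b$ this weight bound forbids any monomial of $p$ containing $y$, hence $p\in K[x]$ and $x=p(f)$ forces $\deg f=1$, contradicting $a\ge 2$. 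Your sketch never carries out this step, and the ``structural consequence'' you invoke presupposes the very divisibility you need.
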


\begin{proof}
Similar to \cite{GY} for zero characteristic case. Here we just use the degree estimate in \cite{LY} instead of \cite{MLY2}.
\end{proof}

\section{Commutators}

\subsection{Restatements of the conjectures}

\noindent For Conjecture \ref{con1}, we can extend it to positive characteristic case naturally. 

\begin{conjecture}\label{con3}
Let $K$ be a field of arbitrary characteristic, $f$ and $g$ be algebraically independent polynomials in $K\langle X\rangle$ such that the homogeneous components of maximal degree of $f$ and $g$ are algebraically dependent. Let $f$ and $g$ generate their own centralizers in $K\langle X\rangle$ respectively. Suppose that $\deg(f)\nmid\deg(g),\deg(g)\nmid\deg(f)$. Then
$$\deg([f,g])>min\{\deg(f),\deg(g)\}.$$
\end{conjecture}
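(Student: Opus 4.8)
The plan is to argue by contradiction, converting the divisibility and centralizer hypotheses into rigid information about the highest homogeneous components. Without loss of generality assume $\deg(f)\le\deg(g)$ and set $m=\min\{\deg(f),\deg(g)\}=\deg(f)$, so the goal becomes $\deg([f,g])>m$. First I would describe the algebraic dependence of $v(f)$ and $v(g)$ via Bergman's centralizer theorem: algebraically dependent elements of a free associative algebra commute, and commuting homogeneous elements lie in a common polynomial subalgebra $K[w]$ for a homogeneous $w$, so $v(f)=\alpha w^{a}$ and $v(g)=\beta w^{b}$ with $\alpha,\beta\in K^{*}$ and $a,b\ge 1$. Then $\deg(f)=a\deg(w)$ and $\deg(g)=b\deg(w)$, so the hypotheses $\deg(f)\nmid\deg(g)$ and $\deg(g)\nmid\deg(f)$ become $a\nmid b$ and $b\nmid a$; in particular $a,b\ge 2$ and $a\ne b$. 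Assuming for contradiction that $\deg([f,g])\le m=\deg(f)$ then gives the crucial numerical bound $D(f,g)=\deg([f,g])/\deg(fg)\le a/(a+b)<1/2$.

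Next I would exploit the cancellation forced by the common element $w$. Since $v(f)$ and $v(g)$ are powers of the single element $w$ they commute, so $[v(f),v(g)]=0$ and the top homogeneous part of $[f,g]$ vanishes; writing $f=v(f)+f'$, $g=v(g)+g'$ with $\deg(f')<\deg(f)$, $\deg(g')<\deg(g)$ gives $[f,g]=[v(f),g']+[f',v(g)]+[f',g']$, which already re-proves $\deg([f,g])<\deg(f)+\deg(g)$. The substance is to bootstrap this one cancellation into a structural statement. Using $v(f)^{b}=\alpha^{b}w^{ab}=(\alpha^{b}/\beta^{a})\,v(g)^{a}$, I would form $h=\beta^{a}f^{b}-\alpha^{b}g^{a}\in K\langle f,g\rangle$, whose leading term is killed, so $\deg(h)<ab\deg(w)$. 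The aim is to iterate this leading-term elimination, at each stage producing elements of $K\langle f,g\rangle$ of controlled and steadily decreasing degree, and to feed the resulting polynomials in $f,g$ into the degree estimate (Theorem \ref{th1}). If the cancellation were limited, the smallness of $D(f,g)$ would eventually force a nonconstant element of $K\langle f,g\rangle$ of degree below $\deg(f)$ to commute with $f$, which by the centralizer hypothesis would place it in $K[f]$ and exhibit $f$ and $g$ inside a common $K[h]$, contradicting their algebraic independence.

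The step I expect to be the genuine obstacle is precisely this lower bound on the amount of cancellation, and here the available machinery pushes the wrong way. Theorem \ref{th1} produces, for every $P$, the inequality $\deg(P(f,g))\ge D(f,g)\,w_{\deg(f),\deg(g)}(P)$, which rearranges to an \emph{upper} bound $D(f,g)\le\deg(P(f,g))/w_{\deg(f),\deg(g)}(P)$; it therefore constrains $\deg([f,g])$ from above and cannot by itself deliver the lower bound the statement demands. The real content must be a separate argument that the iterated elimination terminates before the degree drops to $m$, i.e. that the relation $v(f)^{b}\propto v(g)^{a}$ does not propagate cancellation arbitrarily deep into the subleading parts of $f$ and $g$. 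This is exactly where the characteristic of $K$ enters: in positive characteristic the vanishing of binomial coefficients lets powers of $f$ and $g$ share far more of their lower-degree structure than a naive count predicts, so the cancellation can cascade all the way down and drive $\deg([f,g])$ to or below $\min\{\deg(f),\deg(g)\}$. Controlling this cascade is the crux of any proof along these lines, and it is the very point at which the paper's announced counterexamples intervene.
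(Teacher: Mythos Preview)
The statement you are trying to prove is \emph{false}: the paper does not prove Conjecture~\ref{con3}, it refutes it. Theorem~\ref{th8} exhibits, for each $k\ge 2$, the pair
\[
f=u^{3}+uv+uw+wu,\qquad g=u^{2}+v+w,\qquad u=(xy)^{k}x,\ v=xy,\ w=yx,
\]
for which $\deg(f)=6k+3$, $\deg(g)=4k+2$, and $\deg([f,g])=2k+5<\min\{\deg(f),\deg(g)\}$. So there is no proof to compare your proposal to; any argument along the lines you sketch must break, and the question is only where.

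You in fact locate the gap yourself: the degree estimate (Theorem~\ref{th1}) yields $\deg(P(f,g))\ge D(f,g)\,w_{\deg f,\deg g}(P)$, which only bounds $D(f,g)$ from above and hence cannot force $\deg([f,g])$ to exceed $\min\{\deg f,\deg g\}$. Your proposed ``iterated elimination'' step---repeatedly killing leading terms in $K\langle f,g\rangle$ until something of degree below $\deg f$ appears and then invoking the centralizer hypothesis---has no mechanism to stop the degree from collapsing past $\min\{\deg f,\deg g\}$; the counterexample shows that such collapse does occur. One point in your final paragraph is misleading, however: you attribute the failure to positive characteristic (``vanishing of binomial coefficients''), but the very same $(f,g)$ already disprove the characteristic-zero version (Conjecture~\ref{con1}), as shown by Drensky and Yu in~\cite{DY1}. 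The phenomenon is not a $p$-torsion artifact; it is structural, coming from the bimodule decomposition of $K\langle X\rangle$ over $K[u_{1},u_{2}]$ described in Theorem~\ref{th7}, which permits carefully chosen lower-order terms $r,s$ to cancel $[f,g]$ far below the na\"ive degree in every characteristic.
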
 

\noindent However, for Conjecture \ref{con2} it is not true for the positive characteristic case. 

\

\noindent We modify the statement of Conjecture \ref{con2} as follows to extend it to positive characteristic case, and it is easy to verify that it is equivalent to the Conjecture \ref{con2}  for characteristic zero case.
 
\begin{conjecture}\label{con4}
Let K be an arbitrary field and $g\in K\langle X\rangle$ generates its own centralizer. Then to each pair (m,n) where $m,n\in N$, $m,n\geq 2$, $n\nmid m$ and $g^{1/n}$ makes sense in $K((X))$, namely there exists some $h\in K((X))$ such that $g=h^n$, $g^{m/n}\in K((X))$ has a monomial of positive degree containing a negative power of an indeterminate in X.
 \end{conjecture}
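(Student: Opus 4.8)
Since the final statement is phrased as a conjecture and Section~2 will refute it in positive characteristic, the ``proof'' I have in mind is really a recipe for a counterexample. The guiding idea behind Makar-Limanov's original formulation is that extracting a formal $n$-th root of $g$ ought to drag monomials with negative exponents into $g^{m/n}$; in characteristic $p>0$ the Frobenius endomorphism gives enough slack to defeat this. So the plan is to produce $g\in K\langle x,y\rangle$, with $\operatorname{char}K=p$, whose leading homogeneous component is an honest $n$-th power for some $n$ divisible by $p$ (it is cleanest to try $n=p$ first), and whose lower-order part is tuned so that the formal $n$-th root $h$ of $g$ already lies in the ordinary power-series ring $K[[x,y]]$, i.e.\ every monomial appearing in $h$ has only nonnegative exponents. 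Then $h^m\in K[[x,y]]$ as well, so $g^{m/n}=h^m$ has no monomial of positive degree containing a negative power of $x$ or $y$, contradicting the conclusion of the conjecture for every admissible pair $(m,n)$, e.g.\ $(m,n)=(p+1,p)$.

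The first technical step is the root extraction, carried out by descending induction on degree. Write $g=g_d+g_{d-1}+\cdots+g_0$ with $g_d=\bar h^{\,n}$ for a homogeneous polynomial $\bar h$, and look for $h=\bar h+h_e+h_{e-1}+\cdots$. Each successive homogeneous component of $h$ is pinned down by a linear equation coming from the ``noncommutative binomial'' $(\bar h+\delta)^{\,n}=\bar h^{\,n}+\sum_{i+j=n-1}\bar h^{\,i}\,\delta\,\bar h^{\,j}+O(\delta^{2})$; the operator $\delta\mapsto\sum_{i+j=n-1}\bar h^{\,i}\,\delta\,\bar h^{\,j}$ is precisely the differential of $z\mapsto z^{n}$ at $\bar h$. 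Positive characteristic is here both the engine and the headache: when $p\mid n$ this operator has a nontrivial kernel (it annihilates powers of $\bar h$, for instance), so the root is far from unique, and one must choose $\bar h$ and the tail of $g$ so that each linear system is consistent while using this freedom to keep every $h_k$ a genuine positive-degree polynomial. Because all these equations live among homogeneous polynomials of strictly positive degree, the $h$ built this way automatically stays in $K[[x,y]]$ and never manufactures a negative exponent; moreover $h$ is automatically non-terminating, since a polynomial root would force $h\in C_{K\langle x,y\rangle}(g)$ and hence $h=\psi(h^{n})$, impossible for $n\ge2$ once $g$ is self-centralizing.

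The second step is to guarantee that $g$ generates its own centralizer. By Bergman's centralizer theorem the centralizer of the nonscalar element $g$ in $K\langle x,y\rangle$ is a polynomial ring $K[f]$, so $g=\varphi(f)$ for some univariate $\varphi$, and ``$g$ generates its own centralizer'' is exactly $\deg_{t}\varphi=1$ (equivalently, $g$ is not a proper polynomial expression in any element of $K\langle x,y\rangle$). This is an open, generic condition on the lower-order terms of $g$, so for the explicit $g$ one commits to it can be checked directly --- e.g.\ by exhibiting a monomial of $g$ incompatible with $g\in K[g^{1/k}]$ for all $k\ge2$. Together with the trivial observation that ``$g^{1/n}$ makes sense in $K((X))$'' holds (it does: we have built $h\in K[[x,y]]\subset K((X))$ with $h^{n}=g$) and that $m,n\ge2$ with $n\nmid m$ can be arranged, this finishes the counterexample. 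As an alternative route one could try to transplant the $K[u]$-bimodule analysis of Drensky and Yu in \cite{DY1}, replacing their characteristic-zero bookkeeping by the Frobenius-aware computation above.

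The main obstacle is the balancing act between these two steps: the root extraction wants the tail of $g$ special enough that every linear system stays solvable over $K[[x,y]]$, while self-centrality wants it generic enough that no proper $\varphi$ absorbs $g$; reconciling these demands --- and then verifying, for the concrete $g$ one writes down, that $h^{m}$ contains no positive-degree monomial with a negative exponent for all relevant $m$ --- is where the real work lies, and is exactly the place where the identity $(a+b)^{p}=a^{p}+b^{p}$ for commuting $a,b$, deliberately engineered into the shape of $g$, has to be made to do the heavy lifting.
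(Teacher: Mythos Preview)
Your main plan contains an internal contradiction. You build $h=\bar h+h_e+h_{e-1}+\cdots$ by descending induction on degree, with $\deg\bar h=d/n$, and you insist that every monomial of $h$ lie in the free monoid on $x,y$ (``only nonnegative exponents''). But a monomial in the free monoid has degree equal to its length, which is $\ge 0$; so the homogeneous pieces $h_j$ can only occupy the finitely many degrees $0,1,\dots,d/n$, and $h$ is forced to be a \emph{polynomial}. You yourself note that a polynomial root would land in $C_{K\langle x,y\rangle}(g)=K[g]$, impossible for $n\ge 2$. Thus the three requirements ``support in the free monoid'', ``leading term $\bar h$'', and ``non-terminating'' are mutually incompatible, and no amount of tuning the tail of $g$ can reconcile them. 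The Frobenius identity lets you solve the successive linear systems, but it cannot manufacture infinitely many nonnegative degrees below a fixed bound.

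The paper does \emph{not} try to keep $g^{1/n}$ free of negative powers. It reuses the same element $g=u^2+xy+yx$, $u=(xy)^kx$, that Drensky and Yu used in characteristic zero, and first shows that $g^{1/2}$ exists in the Mal'cev--Neumann algebra $K((X))$ for every characteristic: for $p\neq 2$ this is Bergman's radical lemma (Lemma~\ref{lem9}), while for $p=2$ one works inside $\Z/2\Z((S))$ with $S=\langle xy,yx,u^2\rangle$, conjugates $g$ into the cyclic piece supported on $\langle u^2\rangle$, and extracts the square root there by hand (Lemma~\ref{lem10}). That root is a genuine Mal'cev--Neumann series carrying many negative powers. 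The point --- imported verbatim from \cite{DY1} via the $K[u_1,u_2]$-bimodule decomposition of $K\langle X\rangle$ (Theorem~\ref{th7}) --- is that when one forms $g^{3/2}$, every surviving monomial of \emph{positive} degree happens to lie in the free monoid, even though $g^{1/2}$ itself does not. So the counterexample works for a subtler reason than your plan anticipates; your throwaway ``alternative route'' of transplanting the bimodule analysis is in fact exactly the route the paper takes, and the genuinely new ingredient in positive characteristic is the existence argument of Lemma~\ref{lem10}, not a Frobenius trick.
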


\noindent In \cite{DY1}, V. Drensky and J.-T. Yu proved the following theorem

\begin{theorem}\label{th6}
Let $K$ be a field of characteristic zero. As a $K[u_1,u_2]$-bimodule, $K\langle X\rangle$ is a direct sum of three types of submodules:
(1) $K[u]$; (2) $K[u_1,u_2]t$; (3) $K[u_1,u_2]t_1+K[u_1,u_2]t_2$, where

\bigskip

(1) $K[u]$ is generated by 1 where $u_1^p\cdot 1=u_2^p\cdot 1=u^p$;

(2) $u$ is neither a head or a tail of $t$. If $t$ is a tail (a head respectively) of $u$, and $t'$ is the head (the tail respectively) of $u$, then $ut\not=t'u$ ($tu\not=ut'$ respectively).

(3) $t_1$ and $t_2$ are of the same degree and are a proper head and tail of $u$ respectively such that $t_1u=ut_2$. The defining relation of this module is $u_2t_1=u_1t_2$. What's more, there exist $v_1,v_2\in F$ with $v_1v_2\not=v_2v_1$ and a positive integer $k$ such that 
$$u=(v_1v_2)^kv_1,t_1=v_1v_2,t_2=v_2v_1.$$ 
\end{theorem}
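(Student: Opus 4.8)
## Proof proposal for Theorem \ref{th6}

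The plan is to analyze $K\langle X\rangle$ directly as a $K[u_1,u_2]$-bimodule, where the bimodule structure is the obvious one coming from left and right multiplication by powers of a fixed monomial $u$ (so $u_1$ acts on the left and $u_2$ on the right). First I would fix notation: write every element of $K\langle X\rangle$ in the monomial basis, and for a monomial $w$ consider the two-sided orbit $\{u^i w u^j : i,j \geq 0\}$. The key combinatorial observation is that $K\langle X\rangle$ decomposes as a direct sum over equivalence classes of monomials, where $w \sim w'$ iff $u^i w u^j = u^{i'} w' u^{j'}$ for some exponents; each class spans a sub-bimodule, and these are exactly the indecomposable summands. So the theorem reduces to classifying the possible shapes of such orbits, and this is a purely word-combinatorial question about when $u^i w u^j = u^{i'} w' u^{j'}$ can hold as words in the free monoid.

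Next I would carry out the case analysis on a representative monomial $w$ of minimal length in its class. Case (1): $w$ is a power of $u$ itself (or more precisely $w=1$ after normalization), in which case left and right multiplication by $u$ agree, giving the single-generator module $K[u]$ with the relation $u_1 \cdot 1 = u_2 \cdot 1$ forcing $u_1^p \cdot 1 = u_2^p \cdot 1 = u^p$. Case (2): the orbit of $w$ is \emph{free}, i.e. $u^i w u^j = u^{i'} w u^{j'}$ forces $i=i', j=j'$; here I would show this happens precisely when $u$ is neither a proper head nor a proper tail of $w$, \emph{or} $u$ is a head/tail of $w$ but the relevant relation $uw \neq w'u$ fails — this gives the free rank-one module $K[u_1,u_2]t$. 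Case (3): there is a nontrivial coincidence $u_2 t_1 = u_1 t_2$, i.e. $t_1 u = u t_2$ as words with $t_1, t_2$ proper of equal length; I would invoke the classical Lyndon–Schützenberger / Fine–Wilf type result on the equation $xz = zy$ in a free monoid, which forces $z = (v_1 v_2)^k v_1$, $x = v_1 v_2$, $y = v_2 v_1$ for some words $v_1, v_2$ and $k \geq 0$ — this is exactly the stated normal form $u = (v_1v_2)^k v_1$, $t_1 = v_1 v_2$, $t_2 = v_2 v_1$, and the module is $K[u_1,u_2]t_1 + K[u_1,u_2]t_2$ with single defining relation $u_2 t_1 = u_1 t_2$.

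The bookkeeping step I would then complete is checking that these three types exhaust all orbit shapes and that in type (3) there are no \emph{further} relations beyond $u_2 t_1 = u_1 t_2$ (so the module is not, say, a quotient of something smaller): one shows that iterating the relation $t_1 u = u t_2$ does not produce new coincidences, again by the rigidity of the word equation. I would also need to rule out mixed behavior, e.g. a monomial $w$ for which $u$ is simultaneously a head and a tail in incompatible ways; the length/periodicity constraints from Fine–Wilf handle this.

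The main obstacle is the word-combinatorics in Case (3): pinning down \emph{exactly} when the equation $t_1 u = u t_2$ has a solution with $t_1, t_2$ \emph{proper} factors of $u$, and extracting the clean parametrization $u = (v_1v_2)^k v_1$. This is where the hypothesis that $u$ is not a proper power of another monomial gets used (to guarantee $v_1 v_2 \neq v_2 v_1$, i.e. that $v_1, v_2$ do not commute, which is what keeps type (3) genuinely two-dimensional rather than collapsing into type (1)). Everything else — the direct-sum decomposition over orbits, the freeness argument in type (2), and the identification of the $K[u_1,u_2]$-action — is routine once the orbit structure is understood.
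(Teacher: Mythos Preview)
The paper does not supply its own proof of this theorem: it is quoted as a result of Drensky and Yu from \cite{DY1}, and the only further remark is that ``with the similar proof'' it carries over to arbitrary characteristic (Theorem~\ref{th7}). So there is no argument in the present paper to compare your proposal against beyond that citation.

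That said, your outline is the natural one and is almost certainly what \cite{DY1} does: decompose $K\langle X\rangle$ over two-sided $u$-orbits of monomials, classify each orbit according to whether a nontrivial coincidence $u^i w u^j = u^{i'} w u^{j'}$ can occur, and in the non-free case invoke the classical solution of the word equation $t_1 u = u t_2$ in a free monoid (Lyndon--Sch\"utzenberger / Fine--Wilf) to extract the parametrization $u=(v_1v_2)^k v_1$, $t_1=v_1v_2$, $t_2=v_2v_1$. Your identification of where the hypothesis ``$u$ is not a proper power'' enters --- it is exactly what forces $v_1v_2\neq v_2v_1$ and prevents type~(3) from collapsing to type~(1) --- is correct. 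One small slip to watch: in your Case~(2) you phrase the second subcondition as ``$u$ is a head/tail of $w$'', whereas the theorem's condition there is about $t$ being a proper head or tail of $u$ (with complementary factor $t'$); the distinction matters when $\deg t<\deg u$, so align your case split with the statement before filling in the details.
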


\noindent With the similar proof, we can also extend it  to the general case.

\begin{theorem}\label{th7}
Let $K$ be an arbitrary field. As a $K[u_1,u_2]$-bimodule, $K\langle X\rangle$ is a direct sum of three types of submodules:

(1) $K[u]$; (2) $K[u_1,u_2]t$; (3) $K[u_1,u_2]t_1+K[u_1,u_2]t_2$, where

\bigskip

(1) $K[u]$ is generated by 1where $u_1^p\cdot 1=u_2^p\cdot 1=u^p$;

(2) $u$ is neither a head or a tail of $t$. If $t$ is a tail (a head respectively) of $u$, and $t'$ is the head (the tail respectively) of $u$, then $ut\not=t'u$ ($tu\not=ut'$ respectively).

(3) $t_1$ and $t_2$ are of the same degree and are a proper head and tail of $u$ respectively such that $t_1u=ut_2$. The defining relation of this module is $u_2t_1=u_1t_2$. What's more, there exist $v_1,v_2\in F$ with $v_1v_2\not v_2v_1$ and a positive integer $k$ such that 
$$u=(v_1v_2)^kv_1,t_1=v_1v_2,t_2=v_2v_1.$$ 

\end{theorem}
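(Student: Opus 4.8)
The plan is to recognise that Theorem~\ref{th6} and its proof in \cite{DY1} are purely combinatorial: everything is controlled by the action of left and right multiplication by the fixed monomial $u$ on the set of monomials of $K\langle X\rangle$, and the field $K$ enters only because the monomials form a $K$-basis. So I would recast the statement as a classification of the connected components of an explicit graph on words in a free monoid, transport the $K[u_1,u_2]$-module structure along that classification, and check at each step that the characteristic of $K$ plays no role.

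Concretely, let $W$ be the set of all monomials of $K\langle X\rangle$, a $K$-basis, and let $u_1$ and $u_2$ act by $w\mapsto uw$ and $w\mapsto wu$. These operators commute, so $K\langle X\rangle$ is a $K[u_1,u_2]$-module, and since $u_1,u_2$ send monomials to monomials it splits as $\bigoplus_C\operatorname{span}_K(C)$ with $C$ ranging over the connected components of the graph on $W$ with edges $w\to uw$ and $w\to wu$. The theorem then amounts to showing that every component is one of: (1) $\{u^n:n\ge 0\}$; (2) $\{u^a t u^b:a,b\ge 0\}$ with these words pairwise distinct; (3) $\{u^a t_1 u^b\}\cup\{u^a t_2 u^b\}$ whose only coincidences are $u^a t_1 u^{b+1}=u^{a+1}t_2 u^b$, together with the word-theoretic descriptions of $t,t_1,t_2$ in the statement. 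The associated module is $K[u]$ in case (1), free of rank one in case (2), and the quotient of $K[u_1,u_2]^{2}$ by the single relation $u_2 t_1=u_1 t_2$ in case (3); each is read off from the list of coincidences, with no reference to $K$.

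The core of the work is the classification of components. Every monomial reduces, by repeatedly deleting a leading or a trailing copy of $u$, either to the empty word or to a \emph{source} --- a monomial $t$ of which $u$ is neither a head nor a tail. The component of $1$ is $\{u^n\}$ by elementary cancellation: $u^a w u^b=u^c$ forces $w=u^{c-a-b}$. For a source $t\neq 1$ one decides when a coincidence $u^a t u^b=u^{a'}t u^{b'}$ with $(a,b)\neq(a',b')$ is possible; cancelling a common prefix and using that $u$ is not a head of $t$ forces $|t|<|u|$ with $t$ a proper head of $u$ (and symmetrically with tails), hence an equation $t_1 u=u t_2$ with $|t_1|=|t_2|$. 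If no such coincidence occurs, all the words $u^a t u^b$ are distinct, $\operatorname{span}_K(C)$ is free of rank one, and the boundary situations excluded here are exactly the extra hypotheses of case~(2). If a coincidence does occur, the classical description of the solutions of the word equation $xy=yz$ (Lyndon--Sch\"utzenberger) gives $t_1=v_1 v_2$, $t_2=v_2 v_1$, $u=(v_1 v_2)^k v_1$ with $k\ge 1$, and $v_1 v_2\neq v_2 v_1$ because $u$ is not a proper power of a monomial; one then checks that $t_1$ and $t_2$ span a common component whose only coincidences are the stated ones, which is case~(3). Everything here is a statement about words, so nothing uses $\operatorname{char} K$. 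The main obstacle is precisely this word combinatorics --- the Lyndon--Sch\"utzenberger/Fine--Wilf bookkeeping and the careful treatment of the short cases $|t|<|u|$ --- but it is the content of \cite{DY1} and goes through verbatim in positive characteristic.
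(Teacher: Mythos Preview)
Your proposal is correct and is precisely the paper's approach: the paper simply asserts that the proof of Theorem~\ref{th6} in \cite{DY1} carries over verbatim to arbitrary characteristic, and you have spelled out why---namely that the argument is pure word combinatorics on the monomial basis (orbit decomposition under left/right multiplication by $u$, plus the Lyndon--Sch\"utzenberger description of the equation $t_1u=ut_2$), with the field entering only through linear spans. In fact you give considerably more detail than the paper, which confines itself to a single sentence.
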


\noindent And we also have the same solution for the equation
 $$[u^m,s]+[u^n,r]=0$$
 in positive characteristic case.

\subsection{Counterexamples}

\noindent For some special case, we have the lemma on radicals for positive characteristic case (\cite{B2}) as follows

\begin{lemma}[Bergman]\label{lem9}
Let $Char(K)=p>0$ and $g\in K((X))$ where $v(g)=h^n$. Then if $p\nmid n$, g is an n-th power in $K((X))$. 
\end{lemma}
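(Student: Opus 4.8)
The plan is to reduce the problem to the classical fact that in the formal power series ring (or rather in the Mal'tsev--Neumann--type completion $K((X))$) one can extract roots by a Newton/binomial iteration, and that the only obstruction to this iteration is a denominator that is divisible by the characteristic. First I would normalize: write $g\in K((X))$ with $v(g)=h^n$, where $v(\cdot)$ denotes passage to the leading (lowest- or highest-degree, whichever convention the paper fixes) homogeneous component. Factoring out $h^n$, we reduce to the case $v(g)=1$, i.e. $g=1+g_1$ where every monomial of $g_1$ has strictly larger degree than the ``unit'' part; here the grading on $K((X))$ makes ``$1+(\text{higher order})$'' a well-defined notion and the set of such elements is a group under multiplication. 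So it suffices to show that every $1+g_1$ of this form has an $n$-th root in $K((X))$ when $p\nmid n$.

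Next I would construct the root by the binomial series $g^{1/n}=(1+g_1)^{1/n}=\sum_{k\ge 0}\binom{1/n}{k}g_1^{\,k}$, where $\binom{1/n}{k}=\frac{(1/n)(1/n-1)\cdots(1/n-k+1)}{k!}$. The key point is that each such coefficient lies in $K$: since $p\nmid n$, the element $n$ is invertible in the prime field of $K$, so $1/n\in K$ makes sense, and then the numerator of $\binom{1/n}{k}$ is a product of elements of $K$ while the denominator $k!$ --- \emph{a priori} the source of trouble in characteristic $p$ --- cancels against factors in the numerator exactly as in the characteristic-zero identity $n^k\,k!\,\binom{1/n}{k}=(1-n)(1-2n)\cdots(1-(k-1)n)\cdot 1$, an identity of integers that holds in any commutative ring; since $p\nmid n$ each $1-jn$ that is divisible by $p$ is matched, and more cleanly one just observes that $\binom{1/n}{k}$ is a universal polynomial expression in $1/n$ with coefficients in $\Z[1/n]$ localized away from $p$, hence specializes into $K$. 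Then I would check convergence: because $v(g_1)$ is strictly higher order than $v(1)$, the powers $g_1^{\,k}$ have leading degree tending to infinity (in the relevant ordered value group), so the sum $\sum_k \binom{1/n}{k}g_1^{\,k}$ is a well-defined element of $K((X))$ by the summability property of that completion. Finally, the formal identity $\left(\sum_k\binom{1/n}{k}g_1^{\,k}\right)^n=1+g_1$ holds because it holds over $\Z[1/n]$ (or over $\Q$) as an identity of formal power series and hence over $K$; multiplying back by $h$ gives $(h\cdot(1+g_1)^{1/n})^n=g$, so $g$ is an $n$-th power in $K((X))$.

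The main obstacle is the coefficient computation: making rigorous that $\binom{1/n}{k}\in K$ for all $k$ when $p\nmid n$, i.e. that the factorials in the denominator do no harm. The cleanest route is probably not to fight with $k!$ directly but to note that the generating-function identity $(1+T)^{1/n}\in\Z[1/n][[T]]$ already lives in a ring where $n$ is a unit --- one proves by induction on $k$ that the coefficient $c_k$ of $T^k$ satisfies $n\,k\,c_k=(1-(k-1)n)c_{k-1}$, which determines $c_k\in\Z[1/n,1/2,1/3,\dots]$; but in fact the standard fact is that $(1+T)^{1/n}\in\Z_{(p)}[[T]]$ whenever $p\nmid n$ (Hensel's lemma over $\Z_{(p)}$, since $X^n-(1+T)$ has a simple root $X\equiv 1$ modulo the maximal ideal precisely because $n$ is a unit), and reducing mod $p$ lands the coefficients in $\mathbb{F}_p\subseteq K$. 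With that input, the rest is the routine verification that the substitution $T\mapsto g_1$ is legitimate in $K((X))$ and that it commutes with the algebraic identity, which I would only sketch.
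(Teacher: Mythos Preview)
The paper does not give its own proof of this lemma; it simply attributes the result to Bergman and cites \cite{B2}. So there is nothing to compare against at the level of details. That said, your proposal has a genuine gap that is worth fixing before it can stand as a proof in this setting.

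The algebra $K((X))$ here is the Mal'tsev--Neumann series ring over the \emph{free group} on $X$ (this is how the paper uses it; see the proof of Lemma~\ref{lem10}, where negative powers of group elements and centralizers in the free group appear). In particular it is noncommutative. Your reduction ``factor out $h^n$ to assume $v(g)=1$, take the binomial root $s=(1+g_1)^{1/n}$, then multiply back by $h$'' silently uses $(hs)^n=h^n s^n$. In the noncommutative setting $(hs)^n=hshs\cdots hs$, and there is no reason for $h$ to commute with $s=\sum_k\binom{1/n}{k}(h^{-n}r)^k$, since $r$ (the tail of $g$) need not commute with $h$. So the final step ``$(h\cdot(1+g_1)^{1/n})^n=g$'' is unjustified and in general false.

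Bergman's argument in \cite{B2} --- and you can see its shape in the paper's proof of Lemma~\ref{lem10} --- inserts an extra step that repairs exactly this: one first finds an element $e\in K((X))$ with $v(e)=1$ such that $ege^{-1}$ has support contained in the centralizer of $v(g)$ in the free group. That centralizer is cyclic, so $ege^{-1}$ lives in an honest (commutative) Laurent series ring $K((t))$. \emph{There} your binomial/Hensel argument is perfectly correct and produces an $n$-th root $t_0$ of $ege^{-1}$ when $p\nmid n$; conjugating back, $e^{-1}t_0e$ is an $n$-th root of $g$ in $K((X))$. Your coefficient analysis (that $(1+T)^{1/n}\in\Z_{(p)}[[T]]$ via Hensel) is fine and is exactly what one uses once the problem has been pushed into the commutative Laurent series ring; the missing ingredient is the conjugation step that makes the ``factor out the leading term'' manoeuvre legitimate.
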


\begin{lemma}\label{lem10}
Let $K$ be a field and $Char(K)=2$. Then $g=((xy)^kx)^2+xy+yx\in K((X))$ with $k\geq 2$ has a 2-nd root.
\end{lemma}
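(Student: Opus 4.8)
The plan is to exhibit an explicit power series $h \in K((X))$ with $h^2 = g$, working formally in the Mal'tsev--Neumann series ring where $X = \{x,y\}$ and the monomials are ordered by a fixed degree function (with negative powers allowed). First I would set $u = (xy)^k x$, so that $g = u^2 + (xy + yx)$, and note that $\deg(u^2) = 2(2k+1)$ while $\deg(xy+yx) = 2$; the idea is that $g$ is a ``large leading term plus small perturbation,'' so one should be able to extract a square root as a series $h = u + c_1 + c_2 + \cdots$ where the $c_i$ are homogeneous corrections of strictly decreasing degree, obtained by successive approximation. Because $\mathrm{Char}(K) = 2$, squaring is additive: $(u + c)^2 = u^2 + uc + cu + c^2$, which is what makes the recursion tractable and is precisely why the characteristic hypothesis is essential.

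The key computational step is the first correction. I would look for $h_1 = u + w$ with $w$ of degree $2 - (2k+1) = 1 - 2k < 0$ (so $w$ involves negative powers of $x$ or $y$) such that $u w + w u = xy + yx$ modulo terms of degree $< 2$. Guided by the form of $xy + yx$ and $u = (xy)^k x$, a natural ansatz is $w = \tfrac12(\,\cdot\,)$--type expression; but since we are in characteristic $2$ we cannot divide by $2$, so instead one exploits the symmetry $xy + yx$ (a ``Lie-type'' element is symmetrized) and the structure theorem, Theorem \ref{th7}, describing $K\langle X\rangle$ as a $K[u_1,u_2]$-bimodule. Concretely, $u = (v_1 v_2)^k v_1$ with $v_1 = x$, $v_2 = y$ is exactly the shape appearing in case (3) of Theorem \ref{th7}, with $t_1 = xy = v_1 v_2$ and $t_2 = yx = v_2 v_1$; the defining relation $u_2 t_1 = u_1 t_2$, i.e. $u\cdot(xy) $ and $(yx)\cdot u$ interact, should let me solve $u w + w u = xy + yx$ by choosing $w$ proportional to $x^{-1}$ (or $x^{-1}y^{-1}x^{-1}\cdots$), so that $u w$ cancels against $w u$ appropriately up to the desired remainder. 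I would verify this identity by a direct monomial computation in $K((X))$.

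After the first correction, I would run the induction: given $h_j = u + w_1 + \cdots + w_j$ with $h_j^2 = g + (\text{terms of degree} \le d_j)$ where $d_j \to -\infty$, solve $u w_{j+1} + w_{j+1} u = -(\text{current error})$ for $w_{j+1}$ of the appropriate (ever more negative) degree, using the same bimodule structure. The operator $w \mapsto uw + wu$ on each homogeneous component is, up to the case analysis of Theorem \ref{th7}, invertible on the relevant pieces (the ``type (2)'' and ``type (3)'' summands), which is what guarantees each $w_{j+1}$ exists; the ``type (1)'' summand $K[u]$ is harmless since there the operator is multiplication by $2u = 0$, but the errors we generate will land outside it by the degree bookkeeping. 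Since the degrees of the $w_j$ strictly decrease to $-\infty$ and the support of each is finite in each degree, the sum $h = \sum_{j\ge 0} w_j$ (with $w_0 = u$) is a well-defined Mal'tsev--Neumann series and $h^2 = g$ by construction. This simultaneously shows $g$ has a square root whose expansion manifestly contains monomials of positive degree with negative powers of $x$ (or $y$) — the $w_1$ term already does — which is the phenomenon relevant to Conjecture \ref{con4}.

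The main obstacle I anticipate is solving the homogeneous equation $uw + wu = (\text{given element})$ at each stage, i.e. understanding the cokernel and a section of the map $w \mapsto uw + wu$ on $K((X))$. In characteristic $2$ this is not an invertible map in general (its ``diagonal'' part vanishes), so one genuinely needs the fine decomposition of Theorem \ref{th7} to see that the particular right-hand sides we encounter — starting from $xy + yx$, which lies in the type (3) component attached to $u = (xy)^k x$ — stay in the image. Controlling that the iterated errors never develop a component in the obstruction space (the part of type (1), or the ``bad'' part of type (3) governed by the relation $u_2 t_1 = u_1 t_2$) is the delicate point; I would handle it by tracking, degree by degree, that each error is again of the symmetrized form $\alpha t_1 + \alpha t_2$-type to which the construction applies, so the induction closes.
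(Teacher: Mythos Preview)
Your route is quite different from the paper's and has a real gap at the step you yourself call ``the delicate point.'' The paper does not use Theorem~\ref{th7} or a Hensel-type iteration at all. Instead it observes that $\operatorname{supp}(g)$ lies in the subgroup $S=\langle xy,\,yx,\,u^2\rangle$ of the free group (with $u=(xy)^kx$), works in $\mathbb{Z}/2\mathbb{Z}((S))$, and invokes Bergman's conjugation theorem \cite{B2}: there is a unit $e$ with $v(e)=1$ such that $\operatorname{supp}(ege^{-1})\subseteq C_S(u^2)$. A short parity argument gives $C_S(u^2)=\langle u^2\rangle$, so $ege^{-1}$ is a Laurent series in $u^2$ with coefficients in $\mathbb{Z}/2\mathbb{Z}$; Frobenius then makes the square root $t$ immediate in the \emph{commutative} ring $K((u))$, and $h=e^{-1}te$ is the desired root of $g$. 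No equation of the form $uw+wu=r$ is ever solved.

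The gap in your plan is precisely that equation. Theorem~\ref{th7} decomposes $K\langle X\rangle$, not $K((X))$, and on each summand the operator $w\mapsto (u_1+u_2)w$ is \emph{not} invertible in characteristic~$2$, even after localizing: on a free piece $K[u_1,u_2]\,t$, the formal inverse $(u_1+u_2)^{-1}t=\sum_{j\ge 0}u_1^{-j-1}u_2^{\,j}\,t$ is the series $\sum_{j\ge 0}u^{-j-1}t\,u^{j}$, infinitely many distinct monomials \emph{all of the same total degree}, so it is not a priori a legal Mal'tsev--Neumann element and your ``degree bookkeeping'' cannot rule this out. Your first correction does happen to work (indeed $w_1=u^{-1}(xy)$ solves $uw_1+w_1u=xy+yx$ thanks to $t_1u=ut_2$), but already at the next step you must solve $uw_2+w_2u=w_1^{\,2}$, and the same telescoping attempt produces $\sum_{j\ge 0}u^{-j-1}w_1^{\,2}u^{j}$ with all terms of equal degree. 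You give no mechanism ensuring these infinite same-degree families are well-ordered (or eventually repeat) in the free group, nor that the successive errors avoid the cokernel of $w\mapsto uw+wu$; this is exactly what Bergman's conjugation sidesteps by reducing to a commutative Laurent ring before extracting the root.
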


\begin{proof}
Let $S=\langle xy,yx,((xy)^kx)^2\rangle$ be the subgroup generated by $xy,yx$ and $((xy)^kx)^2$ in $F=\langle x,y\rangle$ which is the free group generated by $x$ and $y$. Then $S\cap \langle (xy)^kx\rangle=\langle ((xy)^kx)^2\rangle$ since on one hand of course $\langle ((xy)^kx)^2\rangle\in S\cap \langle (xy)^kx\rangle$ and on the other hand, to each $s\in S\cap\langle (xy)^kx\rangle$, $deg(s)$ is even since $s\in S$, and hence if $s=((xy)^kx)^m$, $m$ must be even. Namely $S\cap\langle (xy)^kx\rangle=\langle ((xy)^kx)^2\rangle$. 

\noindent Since $S$ is a subgroup of $F$, it is also well-ordered, and hence $Z/2Z((S))$ makes sense. Now consider $g$ as an element of $Z/2Z((S))$, and according to Bergman, there exists some $e\in Z/2Z((S))$ with $v(e)=1$ such that $supp(ege^{-1})\subseteq C_S(((xy)^kx)^2)$. We have proved above that $C_S(((xy)^kx)^2)=\langle ((xy)^kx)^2\rangle$, and hence $ege^{-1}=((xy)^kx)^2+\sum_{i=0}^{+\infty}a_i((xy)^kx)^{-2i}$ where $a_i=0$ or 1. 

\noindent Let $t=(xy)^kx+\sum_{i=0}^{+\infty}a_i((xy)^kx)^{-i}$, and it is easy to see that $t^2=ege^{-1}$. Let $h=e^{-1}te$, and we get $h^2=g$, namely $g$ is a 2-nd power in $Z/2Z((S))$ and of course in $K((X))$.
\end{proof}

\begin{theorem}\label{th8}
Let $X=\{x,y\},k\geq 2$, and let
$$u=(xy)^kx,v=xy,w=yx,$$
$$f=u^3+r,r=uv+uw+wu,$$
$$g=u^2+s,s=v+w.$$

\noindent Then $(f,g)$ is a counterexample for Conjecture \ref{con3}, and the fraction
$$\frac{\deg([f,g])}{\deg(g)}=\frac{(2k+5)}{(4k+2)}$$ 
is strickly larger than 1/2 and can be made as close to 1/2 as possible by increasing $k$.
\end{theorem}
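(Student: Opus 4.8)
The strategy is to verify directly that $(f,g)$ satisfies all the hypotheses of Conjecture~\ref{con3} while violating its conclusion. There are four things to check: (i) $f$ and $g$ are algebraically independent; (ii) the top-degree homogeneous components of $f$ and $g$ are algebraically dependent; (iii) $f$ and $g$ each generate their own centralizer in $K\langle X\rangle$; (iv) $\deg(f)\nmid\deg(g)$ and $\deg(g)\nmid\deg(f)$; and finally (v) the degree computation $\deg([f,g])=2k+5$, whence $\deg([f,g])/\deg(g)=(2k+5)/(4k+2)$, which is both strictly larger than $1/2$ and tends to $1/2$ as $k\to\infty$. Note that $\deg(f)=3(2k+1)=6k+3$ and $\deg(g)=2(2k+1)=4k+2$, so the conclusion $\deg([f,g])>\min\{\deg(f),\deg(g)\}=4k+2$ fails badly since $2k+5<4k+2$ for $k\geq2$; this is exactly the sense in which $(f,g)$ is a counterexample.

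\textbf{Key steps.} First I would record that $v(f)=u^3$ and $v(g)=u^2$ (since $\deg(u)=2k+1$ makes $\deg(u^3)=6k+3>\deg(r)=\deg(uv)=2k+3$ and $\deg(u^2)=4k+2>\deg(s)=2$), so their leading terms are powers of the single monomial $u$ and hence algebraically dependent, giving (ii); algebraic independence of $f,g$ themselves in (i) follows because any nontrivial polynomial relation would, after passing to leading terms, force a relation among $u^3,u^2$ that the lower-order corrections $r,s$ cannot absorb—more concretely, $f$ and $g$ do not lie in a common polynomial subalgebra $K[h]$ since $g-u^2 = v+w$ has a monomial $v=xy$ not a power of anything compatible with $f$. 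For (iii), the centralizer condition, the cleanest route is to invoke the Bergman centralizer theorem: the centralizer of any non-scalar element of $K\langle X\rangle$ is a polynomial ring $K[h]$ in one variable, so $f$ (resp. $g$) generates its own centralizer iff $f$ (resp. $g$) is not a proper power $h^d$ with $d\geq2$ in $K\langle X\rangle$; this one checks by looking at leading terms ($u^3$ and $u^2$ are each ``primitive enough'' relative to the structure of $u=(xy)^kx$) together with the shape of the tails $r$ and $s$. Condition (iv) is immediate: $6k+3$ and $4k+2$ have gcd $2k+1$ and neither divides the other for $k\geq2$. The heart of the matter is (v): computing $[f,g]=[u^3+r,\,u^2+s]=[u^3,s]+[r,u^2]+[r,s]$ since $[u^3,u^2]=0$. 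One shows the dominant contribution comes from $[u^3,s]+[r,u^2]$; using $r=u(v+w)+wu=us+wu-uw+uw=\dots$ — more precisely writing $r=us+[w,u]+uw$ type manipulations — one finds massive cancellation in the top-degree part $[u^3,s]+[u^2 s - s u^2 \text{ pieces}]$, and this is precisely the point of Theorem~\ref{th7} (the $K[u_1,u_2]$-bimodule decomposition) and the remark that ``$[u^m,s]+[u^n,r]=0$'' has a controlled solution set: here $r$ and $s$ are chosen exactly so that $[u^3,s]+[u^2,r]$ has degree far below the naive bound $\deg u^3 + \deg s - 1$ or $\deg u^2 + \deg r - 1$, collapsing down to $2k+5$.

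\textbf{Main obstacle.} The hard part is the exact degree computation in step (v), i.e. pinning down that the leading terms of $[u^3,s]$ and $[r,u^2]$ cancel and that the surviving terms (from $[r,s]$ and from the lower-order remnants) have degree exactly $2k+5$, not merely ``small.'' This requires understanding how $u=(xy)^kx$ interacts with $v=xy$ and $w=yx$ as words: $uv=(xy)^{k+1}x$, $vu=xy(xy)^kx=(xy)^{k+1}x=uv$ — wait, one must track carefully which products coincide and which do not, since $t_1u=ut_2$ type relations with $t_1=v$, $t_2=w$ (as $v,w$ are the head and tail of $u$ up to the appropriate reading) are exactly what drives the cancellation. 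I expect this to reduce to a finite, if intricate, bookkeeping of commutators of monomials of the form $(xy)^ax(xy)^bx$, governed by case~(3) of Theorem~\ref{th7}. Everything else — algebraic independence, the divisibility conditions, and the centralizer-generation via Bergman — is comparatively routine once the leading-term analysis of $u$, $r$, $s$ is in hand. The final sentence of the statement (the fraction exceeds $1/2$ and approaches it) is then a trivial consequence: $(2k+5)/(4k+2) - 1/2 = (2k+5 - (2k+1))/(4k+2) = 4/(4k+2) > 0$, which $\to 0$ as $k\to\infty$.
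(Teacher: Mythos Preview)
Your proposal is correct and follows exactly the approach the paper takes, which is simply to invoke \cite{DY1} verbatim; your outline (verify the hypotheses via leading terms and Bergman's centralizer theorem, then use the bimodule relation of Theorem~\ref{th7} to force the collapse in $[u^3,s]+[r,u^2]$) is precisely the structure of the Drensky--Yu argument. The one slip to straighten out is the word identity you flagged with ``wait'': the operative relation is $vu=uw=(xy)^{k+1}x$ (so $t_1=v$, $t_2=w$ in case~(3) of Theorem~\ref{th7}), not $uv=vu$.
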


\begin{proof}
The same as \cite{DY1}.
\end{proof}

\begin{theorem}\label{th9}
Let $X=\{x,y\},k\geq 2$, and let
$$u=(xy)^kx,v=xy,w=yx,$$
$$g=u^2+s,s=v+u.$$
Then $g^{3/2}$ is a counterexample of Conjecture \ref{con4}.
\end{theorem}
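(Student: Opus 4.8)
The plan is to follow the strategy of Theorem \ref{th8} and the counterexample construction in \cite{DY1}, but now working in characteristic $2$, where Lemma \ref{lem10} (and the Bergman argument behind it) guarantees that the relevant power series root actually exists. First I would set $u=(xy)^kx$, $v=xy$, $w=yx$, and $g=u^2+s$ with $s=v+u$, and observe that $v(g)=u^2=((xy)^kx)^2$ is a square of $(xy)^kx$ in the free monoid, so $n=2$ divides the exponent of the leading term; thus $g^{1/2}$ \emph{a priori} might or might not make sense, and the content of the theorem is that it does make sense (so that the pair $(2,3)$ with $n=2\nmid m=3$ is a legitimate input to Conjecture \ref{con4}) yet $g^{3/2}$ has \emph{no} monomial of positive degree with a negative power of an indeterminate — contradicting the conclusion of Conjecture \ref{con4}.

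The key steps, in order, are as follows. (1) Show $g$ generates its own centralizer in $K\langle X\rangle$: this should follow from the fact that $v(g)=u^2$ is not a proper power of a monomial other than powers of $u$, together with a centralizer computation in the style of \cite{DY1} using the $K[u_1,u_2]$-bimodule decomposition of Theorem \ref{th7} — the centralizer of $g$ is contained in $K[u]$-type pieces, and the lower-order term $s=v+u$ forces it down to $K[g]$. (2) Show $g^{1/2}$ exists in $K((X))$: here I would run the same group-theoretic Bergman argument as in Lemma \ref{lem10}, replacing $s=v+w$ by $s=v+u$; one sets $S=\langle xy, ((xy)^kx)^2, \ldots\rangle$ appropriately, notes $S$ is well-ordered, and uses that in characteristic $2$ the conjugated element $ege^{-1}$ is supported on the centralizer $C_S(u^2)=\langle u\rangle$, hence is of the form $u^2+\sum_{i\ge 0}a_i u^{-2i}$, which is a visible square; conjugating back gives $h\in K((X))$ with $h^2=g$. (3) Compute $g^{3/2}=g\cdot h = (u^2+s)h$ explicitly enough to see that every monomial appearing in it of positive degree involves only $x$ and $y$ with \emph{nonnegative} exponents — i.e., lies in $K\langle X\rangle$ — so the "negative power" phenomenon predicted by the conjecture never occurs. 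Concretely, $h=u+(\text{lower order, nonnegative})$ because the correction terms $a_i u^{-i}$ in the conjugated root pull back, under conjugation by $e$ (which has $v(e)=1$), to elements whose positive-degree part still lies in the free algebra; the point is that the specific choice $s=v+u$ (rather than $v+w$) is engineered so that no cancellation produces negative exponents in the top part of $g^{3/2}$.

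The main obstacle I expect is step (3): one must control the positive-degree part of the actual square root $h=e^{-1}te$ after un-conjugating, not merely of the conjugated series $t$, and verify that multiplying by $g=u^2+s$ does not reintroduce negative-exponent monomials of positive total degree. This is a careful but essentially finite bookkeeping computation in the Mal'tsev–Neumann algebra $\mathbb{Z}/2\mathbb{Z}((S))$, tracking which monomials of $(xy)^kx$ and its inverse survive; the cleanest route is probably to argue that the full positive-degree part of $g^{3/2}$ already lies in the subalgebra $K\langle x,y\rangle\subseteq K((X))$ by exhibiting it as $g$ times the genuine-polynomial truncation of $h$, with the remaining tail contributing only nonpositive-degree terms. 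A secondary point to nail down is that $(2,3)$ satisfies the hypotheses of Conjecture \ref{con4} — $m=3,n=2\ge 2$, $n\nmid m$, and $g^{1/n}$ makes sense by step (2) — so that $g^{3/2}$ is indeed a bona fide counterexample rather than a vacuous one.
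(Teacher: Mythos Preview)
The paper's own proof is two lines: Lemma~\ref{lem9} gives existence of $g^{1/2}$ whenever $\mathrm{char}\,K\neq 2$, Lemma~\ref{lem10} gives it when $\mathrm{char}\,K=2$, and \emph{everything else} --- that $g$ generates its own centralizer and that the positive-degree part of $g^{3/2}$ contains no negative exponents --- is literally deferred to \cite{DY1}, whose computation uses the bimodule decomposition (Theorem~\ref{th7} here) and goes through unchanged in positive characteristic.

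Your proposal diverges from this in three places, and two of them are genuine problems. First, you work exclusively in characteristic $2$; the theorem is meant for arbitrary positive characteristic, and you have omitted the (easier) case $p\neq 2$, which is exactly what Lemma~\ref{lem9} is there for. Second, you take $s=v+u$ at face value and propose to redo the Bergman conjugation argument for this new $s$. But the paper's proof invokes Lemma~\ref{lem10}, which is stated for $s=v+w$; together with the $s=v+w$ in Theorem~\ref{th8} and in \cite{DY1}, this makes it clear that $s=v+u$ is a typographical slip. You should not be adapting Lemma~\ref{lem10} --- you should be citing it.

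Third, and most seriously, your step~(3) has a gap. You want to argue that the positive-degree part of $h=e^{-1}te$ lies in $K\langle x,y\rangle$ because ``$v(e)=1$'' and $t$ has a nice form. That implication is not valid: conjugation by an element with leading term $1$ in a Mal'tsev--Neumann algebra can certainly move negative-exponent monomials into positive total degree. The paper (via \cite{DY1}) avoids this entirely: it never tries to control the conjugation, but instead computes the positive-degree part of $g^{3/2}$ directly inside the $K[u_1,u_2]$-bimodule framework of Theorem~\ref{th7}, where one can read off that only the generators $u,v,w$ appear. Your ``cleanest route'' of writing $g^{3/2}$ as $g$ times a polynomial truncation of $h$ plus a nonpositive-degree tail is closer in spirit to what \cite{DY1} actually does, and that is the line you should pursue --- not the conjugation bookkeeping.
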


\begin{proof}
According to Lemma \ref{lem9} and Lemma \ref{lem10}, $g^{1/2}$ always exists.
The rest are the same as \cite{DY1}.
\end{proof}

\

\section{\bf Acknowledgements}

\noindent Jie-Tai Yu would like to thank Shanghai University and Osaka University
for warm hospitality and stimulating atmosphere during his visit, when part
of the work was done. The authors thank Alexei Belov, Vesselin Drensky and
Leonid Makar-Limanov for their helpful comments and suggestions.

\

\end{document}